%%%%%%%%%%%%%%%%%%%%%%%%%%%%%%%%%%%%%%%%%%%%%%%%%%%%%%%%%%%%%%%%%%%%%%%%%%%%%%%%
%2345678901234567890123456789012345678901234567890123456789012345678901234567890
%        1         2         3         4         5         6         7         8

\documentclass[letterpaper,12pt,conference]{ieeeconf}  % Comment this line out
                                                          % if you need a4paper
%\documentclass[a4paper, 10pt, conference]{ieeeconf}      % Use this line for a4
                                                          % paper

\IEEEoverridecommandlockouts                              % This command is only
                                                          % needed if you want to
                                                          % use the \thanks command
\overrideIEEEmargins
% See the \addtolength command later in the file to balance the column lengths
% on the last page of the document

%Page Settings
%  	Letter Paper 	A4 Paper
%Paper size	(8.5x11.0)in 	(21.59x27.94)cm 	(8.27x11.69)in 	(21x29.7)cm
%Top margin (1st page) 	1.0in	2.54cm 	1.0in	2.54cm
%Top margin (rest) 	.75in 	1.9cm 	0.75in 	1.9cm
%Left margin 	0.75in	1.9cm 	0.75in	1.9cm
%Right margin 	0.75in	1.9cm 	0.52in	1.32cm
%Bottom margin 	0.7in	1.8cm 	1.44in 	3.67cm
%Text width 	7.0in 	17.78cm 	7.0in 	17.78cm
%Text height 	9.5in	24.13cm 	9.5in	24.13cm
%Column width 	3.4in 	8.63cm 	3.4in 	8.63cm
%Column separation 	0.2in 	0.5cm 	0.2in 	0.5cm
%	  	
%Font Settings
%Title	16pt
%Author list	11pt
%Section titles	10pt
%Normal text	10pt
%Table/Figure captions	10pt
%Sub-scripts	8pt
%Sub-sub-scripts	7pt
%Footnotes	8pt

% The following packages can be found on http:\\www.ctan.org
%\usepackage{graphics} % for pdf, bitmapped graphics files
%\usepackage{epsfig} % for postscript graphics files
%\usepackage{mathptmx} % assumes new font selection scheme installed
\usepackage{times} % assumes new font selection scheme installed

\usepackage{amsmath}
\usepackage{amssymb}
\usepackage{amsfonts}
\usepackage{graphicx}
%\usepackage{color}

%%%%%%%%%%%%%%%%%%%%%%%%%%%%%%%%%%%%%%%%%%%%%%%%%%%%%%%%%%%%%%%%%%%%%%%%%%%%%%%
%\setlength{\textwidth}{175truemm}
%\setlength{\oddsidemargin}{-6.5truemm}
%\setlength{\evensidemargin}{1.6mm}
%\setlength{\textheight}{260truemm}
%
%\setlength{\topmargin}{-25truemm}

%\setlength{\textwidth}{170truemm}
%\setlength{\oddsidemargin}{-5truemm}
%\setlength{\textheight}{250truemm} \setlength{\topmargin}{-20truemm}

%%%%%%%%%%%%%%%%%%%%%%%%%%%%%%%%%%%%%%%%%%%%%%%%%%%%%%%%%%%%%%%%%%%%%%%%%%%%%%%

\newtheorem{lemma}{Lemma}
\newtheorem{theorem}{Theorem}
\newtheorem{proposition}{Proposition}

%%%%%%%%%%%%%%%%%%%%%%%%%%%%%%%%%%%%%%%%%%%%%%%%%%%%%%%%%%%%%%%%%%%%%%%%%%%%%%%
\def\<{\leqslant}           % nice less than or equal to sign
\def\>{\geqslant}           % nice larger than or equal to sign
\def\div{{\rm div}}         % divergence
%\def\endproof{{$\Box$\smallskip}}

%%%%%%%%%%%%%%%%%%%%%%%%%%%%%%%%%%%%%%%%%%%%%%%%%%%%%%%%%%%%%%%%%%%%%%%%%%%%%%%

   % real part
   % imaginary part

%%%%%%%%%%%%%%%%%%%%%%%%%%%%%%%%%%%%%%%%%%%%%%%%%%%%%%%%%%%%%%%%%%%%%%%%%%%%%%%

    % set of integers
    % set of positive integers
\def\mR{{\mathbb R}}    % real line
\def\mC{{\mathbb C}}    % complex plane

%%%%%%%%%%%%%%%%%%%%%%%%%%%%%%%%%%%%%%%%%%%%%%%%%%%%%%%%%%%%%%%%%%%%%%%%%%%%%%%
       % probability law
       % probability law
       % probability law
\def\Tr{{\rm Tr}}       % matrix trace
\def\rT{{\rm T}}        % matrix transpose

%%%%%%%%%%%%%%%%%%%%%%%%%%%%%%%%%%%%%%%%%%%%%%%%%%%%%%%%%%%%%%%%%%%%%%%%%%%%%%%

\def\bP{{\mathbf P}}    % probability
\def\bE{{\mathbf E}}    % expectation

%%%%%%%%%%%%%%%%%%%%%%%%%%%%%%%%%%%%%%%%%%%%%%%%%%%%%%%%%%%%%%%%%%%%%%%%%%%%%%%

\def\bra{{\langle}}
\def\ket{{\rangle}}

\def\Bra{\left\langle}
\def\Ket{\right\rangle}

%%%%%%%%%%%%%%%%%%%%%%%%%%%%%%%%%%%%%%%%%%%%%%%%%%%%%%%%%%%%%%%%%%%%%%%%%%%%%%%
     % Lebesgue measure
     % domain (of an operator)
\def\re{{\rm e}}        % number e
\def\rd{{\rm d}}        % differential
\def\d{\partial}        % partial derivative

%%%%%%%%%%%%%%%%%%%%%%%%%%%%%%%%%%%%%%%%%%%%%%%%%%%%%%%%%%%%%%%%%%%%%%%%%%%%%%%
       % Sobolev space

\def\cL{{\mathcal L}}

%%%%%%%%%%%%%%%%%%%%%%%%%%%%%%%%%%%%%%%%%%%%%%%%%%%%%%%%%%%%%%%%%%%%%%%%%%%%%%%
%\def\'{{\,'}}

\def\x{\times}

\def\mG{{\mathbb G}}

\def\wh{\widehat}
\def\wt{\widetilde}

\def\mP{{\mathbb P}}

\def\bD{{\bf D}}

\def\cF{{\cal F}}
\def\cW{{\mathcal W}}

\def\cM{{\mathcal M}}

\def\cC{{\mathcal C}}

\def\cov{{\bf cov}}

\def\cN{{\cal N}}

\def\bL{{\bf L}}
\def\bM{{\bf M}}

\def\mS{{\mathbb S}}

\def\Ups{\Upsilon}
\def\phi{\varphi}

    % diagonal matrix

\begin{document}
%%%%%%%%%%%%%%%%%%%%%%%%%%%%%%%%%%%%%%%%%%%%%%%%%%%%%%%%%%%%%%%%%%%%%%%%%%%%%%%
\title{\Large\bf
Minimum Relative Entropy State Transitions in Linear Stochastic Systems: the Continuous Time Case
}
%==============================================================================
\author{
    Igor G. Vladimirov,\qquad Ian R. Petersen
\thanks{This work is supported by the Australian Research Council. The authors are with the School of Engineering and Information Technology, University of New South Wales at the  Australian  Defence Force Academy, Canberra ACT 2600, Australia. E-mail: {\tt igor.g.vladimirov@gmail.com, i.r.petersen@gmail.com}.}
}
%==============================================================================
\onecolumn
\pagestyle{plain}
\maketitle
\thispagestyle{empty}
%\pagestyle{empty}

%==============================================================================
\begin{abstract}
This paper develops a dissipativity theory  for dynamical systems governed by linear It$\hat{\bf o}$  stochastic differential equations driven by random noise with an uncertain drift. The deviation of the noise from a standard Wiener process in the nominal model is quantified by relative entropy. The paper discusses a dissipation inequality for the noise relative entropy supply. The problem of minimizing the supply required to drive the system between given Gaussian state distributions over a specified time horizon is considered. This problem, known in the literature as the Schr\"{o}dinger bridge, was treated previously in the context of reciprocal processes. The paper obtains a closed-form smooth solution to a  Hamilton-Jacobi equation for the minimum required relative entropy supply by using nonlinear algebraic techniques.
\end{abstract}

%==============================================================================

%%%%%%%%%%%%%%%%%%%%%%%%%%%%%%%%%%%%%%%%%%%%%%%%%%%%%%%%%%%%%%%%%%%%%%%%%%%%%%%%
%\paragraph*{Keywords and phrases:}
%%%%%%%%%%%%%%%%%%%%%%%%%%%%%%%%%%%%%%%%%%%%%%%%%%%%%%%%%%%%%%%%%%%%%%%%%%%%%%%%
%
%
%open stochastic system, relative  entropy supply, dissipation
%inequality, Fokker-Planck-Kolmogorov equation,
%Hamilton-Jacobi-Bellman equation.
%
%
%%%%%%%%%%%%%%%%%%%%%%%%%%%%%%%%%%%%%%%%%%%%%%%%%%%%%%%%%%%%%%%%%%%%%%%%%%%%%%%%
%\paragraph*{2010 MSC codes:}
%%%%%%%%%%%%%%%%%%%%%%%%%%%%%%%%%%%%%%%%%%%%%%%%%%%%%%%%%%%%%%%%%%%%%%%%%%%%%%%%
%
%60H10,      % Stochastic ordinary differential equations
%93E20,      % Optimal stochastic control
%94A17,      % Measures of information, entropy
%82C31,      % Stochastic methods (Fokker-Planck, Langevin, etc.)
%35F21.      % Hamilton-Jacobi equations
%%93B05.      % Controllability
%%%%%%%%%%%%%%%%%%%%%%%%%%%%%%%%%%%%%%%%%%%%%%%%%%%%%%%%%%%%%%%%%%%%%%%%%%%%%%%%

%%%%%%%%%%%%%%%%%%%%%%%%%%%%%%%%%%%%%%%%%%%%%%%%%%%%%%%%%%%%%%%%%%%%%%%%%%%%%%%
\section{Introduction}
%%%%%%%%%%%%%%%%%%%%%%%%%%%%%%%%%%%%%%%%%%%%%%%%%%%%%%%%%%%%%%%%%%%%%%%%%%%%%%%

We consider a dynamical system whose state is a diffusion process governed by a linear It$\hat{\rm o}$ stochastic differential equation (SDE) driven by a random noise. The noise is generated from a standard Wiener process by another SDE with an uncertain drift. The case where the drift vanishes and the noise replicates the Wiener process, represents the {\it nominal} scenario. A nonzero drift in the noise SDE can be interpreted as the strategy of a hypothetical player who uses the past history of the system state in order to move its probability density function (PDF) away from the {\it nominal invariant state PDF}. The deviation of the actual noise distribution from the Wiener measure  can be quantified by the Kullback-Leibler relative entropy \cite{CT_2006}. As a measure of uncertainty in the noise distribution, the relative entropy is often utilized  in the robust control of stochastic systems \cite{CR_2007,DJP_2000,PUS_2000,UP_2001}.

The {\it noise relative entropy} over a bounded time interval can be regarded as a stochastic analogue of the supply which is a fundamental concept in the theory of deterministic dissipative systems \cite{Willems_1972}. This analogy leads to a {\it dissipation inequality} which links the noise relative entropy supply with the increment in the relative entropy of the state PDF of the system with respect to the nominal invariant state PDF. The {\it state relative entropy}, therefore, plays the role of a {\it  storage function}. The relative entropy dissipation inequality is related to Jarzynski's equality \cite{Jarzynski_1996} for the Helmholtz free energy in open dynamical systems. This non-equilibrium thermodynamics viewpoint, where the noise results from interaction of the system with its surroundings (via mechanical work and heat transfer), motivates a stochastic dissipativity theory in the form of a variational problem  involving  entropy. Such problems are more complex than their deterministic counterparts since they deal with probability measures (or PDFs) on signal spaces, rather than the signals themselves.

We are mainly concerned with computing the minimum noise relative entropy supply required to drive the system between given initial and terminal state PDFs over a specified time horizon. The {\it state PDF transition problem}, known as the Schr\"{o}dinger bridge, was treated previously in a context of reciprocal processes (Markov random fields on the time axis) \cite{Beghi_1994,Blaqiere_1992,DaiPra_1991,Mikami_1990}. This problem was also studied for quantum systems \cite{BFP_2002}, using the formalism of stochastic mechanics \cite{Nelson_2001}.  The solution of the Schr\"{o}dinger bridge problem is related to two coupled integral equations \cite[Definition 2.3 on p.~26]{Mikami_1990} and is not available in closed form  for a general diffusion model.

We consider the state PDF transition problem with Gaussian initial and terminal state PDFs and undertake a different, somewhat more algebraic, approach.   Using Markovization and stochastic linearization of the noise strategy as entropy-decreasing operations, we establish a mean-covariance separation principle which splits the minimum required noise relative entropy supply into two independent terms associated with the mean and covariance matrix of the system state. While the \textit{mean} part  is calculated using standard linear quadratic optimization,   the \textit{covariance} part (which  is a function of matrices) satisfies a Hamilton-Jacobi equation (HJE) complicated by inherent noncommutativity.

This partial differential equation (PDE) has a quadratic Hamiltonian on its right-hand side (with the quadraticity coming from the diffusion part of the system dynamics) and involves a certain boundary condition. The bilinear ``interaction'' of solutions of this PDE (which, in the quadratic case, replaces the superposition principle) allows them to be generated in a quasi-additive way. Unlike infinitesimal perturbation techniques based on asymptotic expansions in the small noise limit, our approach provides a {\it finite} correction scheme which allows a closed-form smooth solution to be found for the HJE.

The correction scheme also employs  an ansatz class of ``trace-analytic'' functions of matrices and a  matrix version of the separation of variables which not only copes with the nonlinearity but also essentially ``scalarizes''  the covariance HJE, thus overcoming the noncommutativity issues.
These nonlinear algebraic techniques may therefore be of interest in their own right from the viewpoint of nonlinear PDEs and holomorphic functional calculus.

%%%%%%%%%%%%%%%%%%%%%%%%%%%%%%%%%%%%%%%%%%%%%%%%%%%%%%%%%%%%%%%%%%%%%%%%%%%%%%%
\section{Class of systems being considered\label{sec:system}}
%%%%%%%%%%%%%%%%%%%%%%%%%%%%%%%%%%%%%%%%%%%%%%%%%%%%%%%%%%%%%%%%%%%%%%%%%%%%%%%

We consider a  dynamical system whose state $X:=(X_t)_{t\> 0}$ is a diffusion process in $\mR^n$
governed by an
It$\hat{\rm o}$ SDE
\begin{equation}
\label{XW}
    \rd X_t
    =
    f(X_t)\rd t
    +
    B
    \rd W_t,
    \qquad
    f(x):= \mu+Ax,
\end{equation}
driven by an $\mR^m$-valued random noise
$W:=(W_t)_{t\> 0}$.
Here, $\mu \in \mR^n$, $A\in \mR^{n\x n}$, $B \in \mR^{n\x m}$, with
$A$ Hurwitz and ${\rm rank} B = n\< m$, so that the diffusion matrix
\begin{equation}
\label{DBB}
    D := BB^{\rT}
\end{equation}
is positive definite. The noise $W$ is an
It$\hat{\rm o}$ process interpreted as an external random noise which originates from interaction of the
system with its environment
and is generated by another SDE
\begin{equation}
\label{WV}
    \rd W_t
    =
    h_t \rd t + \rd \cW_t,
\end{equation}
with an uncertain drift $h := (h_t)_{t\> 0}$.
Here, $h$ is a
random process with values in $\mR^m$, adapted to the natural filtration
$(\cF_t)_{t\> 0}$ of $X$, where $\cF_t$ is the
$\sigma$-subalgebra of events induced by the history $X_{[0,t]}$ of
 $X$ on the time interval $[0,t]$. Also,
 $\cW:= (\cW_t)_{t\> 0}$ is an $m$-dimensional standard Wiener process, independent of  $X_0$.
  Substituting (\ref{WV}) into (\ref{XW}) yields
\begin{equation}
\label{gaussXV}
    \rd X_t
    =
    (f(X_t) + Bh_t)\rd t
    +
    B
    \rd \cW_t.
\end{equation}
We assume that $\bE\int_0^t |h_s|^2\rd s < +\infty$ for all $t>0$. Together with $D\succ 0$, the local mean square integrability of $h$ ensures the absolute continuity of the system state  $X_t$.
 The case $h\equiv 0$ (where $W \equiv \cW$) represents the {\it nominal} scenario of the system-environment interaction. 
   A nonzero drift $h_t$ is interpreted as the strategy of a hypothetical player who uses the past history $X_{[0,t]}$  of the system state $X_t$ to move the {\it state PDF} $p_t$ away from the \textit{nominal invariant state PDF}
\begin{equation}
\label{gauss_p*}
    p_*(x)
    =
    (2\pi)^{-n/2}(\det \Pi_*)^{-1/2}
    \exp
    (
        -
        \|x-\alpha_*\|_{\Pi_*^{-1}}^2/2
    ),
\end{equation}
which the system would have in the nominal case. The nominal invariant state distribution is Gaussian, $\cN(\alpha_*, \Pi_*)$,  with mean $\alpha_*$ and covariance matrix $\Pi_*$ given by
\begin{equation}
\label{alphaPi*}
    \alpha_*
    =
    -A^{-1}\mu,
    \qquad
    \Pi_*
    =
    \int_{0}^{+\infty}
    \re^{At}
        D
    \re^{A^{\rT} t}
    \rd t,
\end{equation}
where $\Pi_*$ is  the infinite-horizon controllability
Gramian of the pair $(A, B)$ satisfying the algebraic Lyapunov
equation
\begin{equation}
\label{Pi*}
    A\Pi_* + \Pi_* A^{\rT} +D
    =
    0.
\end{equation}
The deviation of the actual noise distribution from
the Wiener measure is quantified by
\begin{equation}
\label{E}
    E_t
     :=
    \bD(\bP_t \| \bP_t^*)
    -
    \bD(\bP_0 \| \bP_0^*)
    =
        \frac{1}{2}
        \int_{0}^{t}
        \bE(|h_s|^2)\rd s.
\end{equation}
Here, Girsanov's theorem \cite{Girsanov_1960} is used;  $\bD(M\|N):= \bE_M\ln(\rd M/\rd N)$ is the Kullback-Leibler
relative entropy \cite{CT_2006} of a probability measure $M$ with respect to another probability measure $N$ (under the assumption of absolute continuity $M\ll N$); and $\bP_t$ and $\bP_t^*$ are the
restrictions of the true and nominal probability measures $\bP$ and
$\bP_*$ to the $\sigma$-algebra $\sigma(X_0, W_{[0,t]})$. The expectation  $\bE$ in (\ref{E}) is over
$\bP$ under which the noise $W$, governed by
(\ref{WV}), becomes a standard Wiener process if and only if
$h\equiv 0$. The {\it noise relative entropy} $E_t$ over the time
interval $[0,t]$ from (\ref{E}) can be regarded as a stochastic
counterpart of the {\it supply} in
the theory of deterministic dissipative systems \cite{Willems_1972}.

In the nominal case $h\equiv 0$,
the state $X$  of the system is a
homogeneous Markov diffusion process and the state PDF $p_t$
satisfies the Fokker-Planck-Kolmogorov equation (FPKE)
\begin{equation}
\label{FPK}
    \d_t p_t
    =
    \cL^{\dagger}(p_t),
\end{equation}
where
\begin{equation}
\label{cL+}
    \cL^{\dagger}(p)
    :=
    \div^2(Dp)/2    -\div (fp).
\end{equation}
Here, for any twice continuously differentiable function $G :=
(G_{ij})_{1\<i,j\< n}: \mR^n \to \mS_n$, with $\mS_n$ the space of real symmetric matrices of order $n$, the maps $\div G: \mR^n \to
\mR^n$ and $\div^2 G: \mR^n \to \mR$ are defined by
$    \div G
    :=
    (
        \sum_{j=1}^{n}
        \nabla_j G_{ij}
    )_{1\< i\< n}
$
and $
    \div^2 G
    :=
    \div \div G
    =
    \sum_{1\< i, j \< n}
    \nabla_i \nabla_j G_{ij}
$,
where $\nabla_i:= \d_{x_i}$ is the partial derivative with respect to  the $i$th
Cartesian coordinate in $\mR^n$. 
The operator
$\cL^{\dagger}$ in (\ref{cL+}) is the formal adjoint of the
infinitesimal generator $\cL$ of $X$
in the nominal case. The action of $\cL$ on a twice
continuously differentiable test function $\phi: \mR^n \to \mR$ with
bounded support is described by
\begin{equation}
\label{cL}
    \cL(\phi)
    =
    f^{\rT}\nabla \phi
    +
    \Tr(D \phi'')/2,
\end{equation}
where $(\cdot)''$  is the Hessian
matrix. Since $A$ is Hurwitz, the system is ergodic under the nominal noise $W=\cW$
and the PDF (\ref{gauss_p*}) is a steady-state solution of the FPKE
(\ref{FPK}): $\cL^{\dagger}(p_*)\equiv 0$. 
The controllability of $(A,B)$ (which follows from $D \succ 0$) is equivalent to
$\Pi_*\succ 0$, and is also equivalent to the
nonsingularity of the finite-horizon controllability Gramian
\begin{equation}
\label{Gamma}
    \Gamma_t
    :=
    \int_{0}^{t}
    \re^{As} D \re^{A^{\rT}s}
    \rd s
    =
    \Pi_* - \re^{At} \Pi_* \re^{A^{\rT}t}
\end{equation}
for any $t>0$. We define two semigroups of affine transformations  $(M_t)_{t \> 0}$ and
$(C_t)_{t\> 0}$ by
\begin{align}
\label{M}
    M_t(\alpha)
     &:= 
    \re^{At} \alpha + A^{-1}(\re^{At}-I_n) \mu
    =
    \alpha_* + \re^{At} (\alpha-\alpha_*),\\
 \label{C}
    C_t(\Sigma)
     &:=  
    \re^{At} \Sigma \re^{A^{\rT}t} + \Gamma_t
    =
    \Pi_* + \re^{At} (\Sigma - \Pi_*)\re^{A^{\rT}t}.
\end{align}
These semigroups act on $\mR^n$ and the set $\mS_n^+$ of real positive semi-definite symmetric matrices of order $n$ and describe the nominal
evolution of the state mean and covariance matrix
\begin{equation}
\label{alphaPi}
    \alpha_t
    :=
    \bE X_t,
    \qquad
    \Pi_t
    :=
    \cov(X_t).
\end{equation}
The infinitesimal generators of the semigroups are given by
\begin{equation}
\label{MCdot}
    \cM(\alpha)
     =
    \mu + A \alpha,
    \qquad
     \cC(\Sigma)
      =
     A\Sigma + \Sigma A^{\rT} + D.
\end{equation}
In general (when $h\not\equiv 0$), the linearity of the SDE
(\ref{gaussXV}) allows the dynamics of (\ref{alphaPi})  to be described by
\begin{equation}
\label{alphaPidot}
    \dot{\alpha}_t
      =
     \cM(\alpha_t)
     +
     B \beta_t,
     \qquad
    \dot{\Pi}_t
      =
     \cC(\Pi_t)
     +
     BK_t\Pi_t + \Pi_t K_t^{\rT} B^{\rT}
\end{equation}
in terms of the moments
\begin{equation}
\label{betaK}
    \beta_t
    :=
    \bE h_t,
    \qquad
    K_t
    :=
    \cov(h_t,X_t) \Pi_t^{-1}.
\end{equation}

%%%%%%%%%%%%%%%%%%%%%%%%%%%%%%%%%%%%%%%%%%%%%%%%%%%%%%%%%%%%%%%%%%%%%%%%%%%%%%%
\section{Markovization and state PDF dynamics\label{sec:state_PDF_evolution}}
%%%%%%%%%%%%%%%%%%%%%%%%%%%%%%%%%%%%%%%%%%%%%%%%%%%%%%%%%%%%%%%%%%%%%%%%%%%%%%%

For any $t\> 0$, we define a function  $\overline{h}_t: \mR^n \to
\mR^m$ associated with the noise strategy $h$ by
\begin{equation}
\label{hover}
    \overline{h}_t(x)
    :=
    \bE(h_t | X_t = x).
\end{equation}
In particular, if
$h_t$ is a deterministic function of $t$ and the
current state $X_t$, then
\begin{equation}
\label{hmarkov}
    h_t
    =
    \overline{h}_t(X_t),
    \qquad
    t\> 0.
\end{equation}
The noise strategies $h$, satisfying (\ref{hmarkov}) with probability one, are said to be Markov  with respect to the state of the system.

%==============================================================================
\begin{proposition}
\label{prop:state_PDF_evolution} Suppose that the state PDF $p_t(x)$
of the system, governed by (\ref{gaussXV})  is continuously
differentiable in $t>0$ and twice continuously differentiable
in $x \in \mR^n$. Then it satisfies the FPKE
\begin{equation}
\label{FPKh}
    \d_t p_t
    = 
    \div^2(Dp_t)/2-\div ((f + B \overline{h}_t) p_t)
=
    \cL^{\dagger}(p_t)
    -
    \div(B\overline{h}_t p_t),
\end{equation}
where the operator $\cL^{\dagger}$ is defined by (\ref{cL+}).
\end{proposition}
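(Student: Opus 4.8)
The plan is to derive (\ref{FPKh}) in weak (distributional) form by testing against smooth compactly supported functions, and then upgrade to the stated strong form using the assumed regularity of $p_t$. The second equality in (\ref{FPKh}) is purely algebraic and requires no argument: expanding $\div((f+B\overline{h}_t)p_t)=\div(fp_t)+\div(B\overline{h}_t p_t)$ by linearity of $\div$ and recalling the definition (\ref{cL+}) of $\cL^{\dagger}$ identifies the two right-hand expressions at once. So the substance is establishing the middle expression.

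First I would fix a twice continuously differentiable $\phi:\mR^n\to\mR$ with bounded support and apply It\^o's formula to $\phi(X_t)$ along the SDE (\ref{gaussXV}), using the quadratic variation $\rd\langle X\rangle_t=D\rd t$ from (\ref{DBB}). Recognizing the nominal generator (\ref{cL}) in the drift, this gives
\[
    \rd\phi(X_t)
    =
    \big[(\cL\phi)(X_t)+\nabla\phi(X_t)^{\rT}Bh_t\big]\rd t
    +
    \nabla\phi(X_t)^{\rT}B\,\rd\cW_t.
\]
Taking expectations annihilates the stochastic integral: since $\phi$ has bounded support, $\nabla\phi$ is bounded, so $\int_0^t\nabla\phi(X_s)^{\rT}B\,\rd\cW_s$ is a genuine zero-mean martingale, while the assumption $\bE\int_0^t|h_s|^2\rd s<+\infty$ makes the drift term integrable. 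Hence $\tfrac{\rd}{\rd t}\bE\phi(X_t)=\bE[(\cL\phi)(X_t)]+\bE[\nabla\phi(X_t)^{\rT}Bh_t]$.

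The crucial step, and the one I expect to carry the whole result, is the Markovization of the second term. Since $\nabla\phi(X_t)$ and $B$ are $\sigma(X_t)$-measurable, the tower property of conditional expectation and the definition (\ref{hover}) of $\overline{h}_t$ yield
\[
    \bE\big[\nabla\phi(X_t)^{\rT}Bh_t\big]
    =
    \bE\big[\nabla\phi(X_t)^{\rT}B\,\bE(h_t\mid X_t)\big]
    =
    \bE\big[\nabla\phi(X_t)^{\rT}B\overline{h}_t(X_t)\big].
\]
This is the heart of the matter: although $h_t$ depends on the entire past history $X_{[0,t]}$ and is in general not a function of $X_t$ alone, only its conditional mean $\overline{h}_t(X_t)$ survives when integrated against functions of the current state. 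Every term is now a function of $X_t$, so rewriting the expectations as integrals against $p_t$ and using the $C^1$-in-$t$ regularity to bring $\tfrac{\rd}{\rd t}$ inside gives $\int_{\mR^n}\phi\,\d_t p_t\,\rd x=\int_{\mR^n}\big[(\cL\phi)p_t+\nabla\phi^{\rT}B\overline{h}_t\,p_t\big]\rd x$.

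Finally I would integrate by parts, discarding boundary terms by compact support: by the very definition of $\cL^{\dagger}$ as the formal adjoint of $\cL$ one has $\int(\cL\phi)p_t\,\rd x=\int\phi\,\cL^{\dagger}(p_t)\,\rd x$, while $\int\nabla\phi^{\rT}(B\overline{h}_t p_t)\,\rd x=-\int\phi\,\div(B\overline{h}_t p_t)\,\rd x$. This leaves $\int_{\mR^n}\phi\big[\d_t p_t-\cL^{\dagger}(p_t)+\div(B\overline{h}_t p_t)\big]\rd x=0$ for every admissible $\phi$; the arbitrariness of $\phi$ together with the $C^2$-in-$x$ smoothness of $p_t$ then forces the bracketed integrand to vanish pointwise, which is exactly (\ref{FPKh}). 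The only genuinely delicate points are the rigorous justification of the martingale and differentiation-under-the-integral steps, but the integrability hypothesis on $h$ and the stated smoothness of $p_t$ are precisely what render these routine rather than problematic.
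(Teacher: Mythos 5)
Your proposal is correct and follows essentially the same route as the paper: the paper's proof also rests on the conditioning identity $\bE(f(X_t)+Bh_t\mid X_t=x)=f(x)+B\overline{h}_t(x)$ combined with the weak formulation of the FPKE for It\^o processes, which it cites from Mikami rather than rederiving via It\^o's formula as you do. Your version merely fills in the details of that cited weak formulation, so there is nothing substantive to distinguish the two arguments.
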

%==============================================================================
\begin{proof} In view of the smoothness of $p_t$ and  the identity
$
    \bE(f(X_t) + B h_t | X_t = x)
    =
    f(x) + B\overline{h}_t(x)
$
which follows  from (\ref{hover}),  the PDE (\ref{FPKh}) is
obtained from the weak formulation of the FPKE for It$\hat{\rm
o}$ processes in \cite[Eqs. (0.11)--(0.13) on p.~21]{Mikami_1990}.
\end{proof}
%==============================================================================

The PDE (\ref{FPKh}) governs the PDF of a
Markov diffusion process $\xi:= (\xi_t)_{t\> 0}$ generated by the
SDE
$    \rd \xi_t
    =
    (f(\xi_t) + B \overline{h}_t(\xi_t))
    \rd t
    +
    B \rd \cW_t
$.
If $\xi_0$ and $X_0$ are identically distributed with the state PDF $p_0$, then
$\xi_t$ and $X_t$ share the common
PDF $p_t$ for any $t>0$.  The passage $h_t\mapsto \overline{h}_t(X_t)$ from an arbitrary noise strategy $h$ to the Markov
strategy,  defined by
(\ref{hover}), is referred to as the {\it Markovization} of $h$
and denoted by $\bM$. Although $\bM$ preserves the state PDFs
$p_t$ of the system, the multi-time probability
distributions of $X$
are, in general, modified. They all remain unchanged under the Markovization $\bM$ if and only if $h$ is Markov.
 Since such strategies are invariant under $\bM$, the Markovization
is idempotent: $\bM^2 = \bM$.

%====================================================================
\begin{theorem}
\label{th:markovization} The Markovization $\bM$ of a noise strategy $h$ does not increase
the noise relative entropy supply (\ref{E}):
\begin{equation}
\label{EMarkov}
    E_T
    \>
    \frac{1}{2}
    \int_{0}^{T}
    \bE
    (
        |\overline{h}_t(X_t)|^2
    )
    \rd t,
\end{equation}
where $\overline{h}_t$ is given by (\ref{hover}).
This inequality is an equality if and only if $h$ is Markov in the sense of (\ref{hmarkov}).
\end{theorem}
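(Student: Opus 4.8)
The plan is to reduce the time-integrated bound (\ref{EMarkov}) to a pointwise-in-$t$ statement and then invoke the $L^2$-projection property of conditional expectation. By the standing assumption $\bE\int_0^T |h_s|^2\rd s < +\infty$, the random vector $h_t$ belongs to $L^2$ for Lebesgue-almost every $t\in[0,T]$, so the conditional expectation $\overline{h}_t(X_t) = \bE(h_t\mid X_t)$ from (\ref{hover}) is well-defined and coincides with the orthogonal projection of $h_t$ onto the closed subspace of square-integrable $\sigma(X_t)$-measurable functions. The entire argument then rests on the Pythagorean identity for this projection, applied separately at each such $t$.

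First I would fix such a $t$ and split $h_t = \overline{h}_t(X_t) + \big(h_t - \overline{h}_t(X_t)\big)$. The cross term vanishes in mean: since $\overline{h}_t(X_t)$ is $\sigma(X_t)$-measurable, the tower property together with the defining property of the conditional expectation gives
\[
    \bE\big(\overline{h}_t(X_t)^{\rT}(h_t - \overline{h}_t(X_t))\big)
    =
    \bE\big(\overline{h}_t(X_t)^{\rT}\,\bE(h_t - \overline{h}_t(X_t)\mid X_t)\big)
    =
    0 .
\]
Hence one obtains the orthogonal decomposition
\[
    \bE(|h_t|^2)
    =
    \bE(|\overline{h}_t(X_t)|^2)
    +
    \bE(|h_t - \overline{h}_t(X_t)|^2),
\]
whose last term is the mean conditional variance $\bE\big(\bE(|h_t-\overline{h}_t(X_t)|^2\mid X_t)\big)\>0$.

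Second I would integrate this identity over $[0,T]$ (justifying the interchange of $\bE$ and $\int_0^T\rd t$ by joint measurability of $(t,\omega)\mapsto h_t$ and Fubini--Tonelli under the finiteness hypothesis) and multiply by $\tfrac12$. Recalling from (\ref{E}) that $E_T = \tfrac12\int_0^T \bE(|h_s|^2)\rd s$, discarding the nonnegative residual term $\tfrac12\int_0^T \bE(|h_t - \overline{h}_t(X_t)|^2)\rd t$ yields (\ref{EMarkov}). Equality holds precisely when this residual integral is zero, i.e.\ $h_t = \overline{h}_t(X_t)$ almost surely for almost every $t\in[0,T]$, which is exactly the Markov property (\ref{hmarkov}). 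I do not anticipate a genuine obstacle: the mathematical content is the standard fact that conditioning is a nonexpansive map in $L^2$. The only points requiring care are verifying that the integrability and measurability hypotheses legitimize both the conditional expectation and the Fubini interchange, and handling the ``almost every $t$'' qualifier when translating the equality condition into the pointwise statement (\ref{hmarkov}).
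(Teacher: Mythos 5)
Your proof is correct and takes essentially the same route as the paper: both reduce (\ref{EMarkov}) to the pointwise-in-$t$ fact that conditioning on $X_t$ does not increase the $L^2$-norm and then integrate over $[0,T]$, with the equality case coming from the vanishing of the gap. Your Pythagorean decomposition $\bE(|h_t|^2)=\bE(|\overline{h}_t(X_t)|^2)+\bE(|h_t-\overline{h}_t(X_t)|^2)$ is simply the explicit form of the conditional Jensen inequality (with strict convexity of $|\cdot|^2$) that the paper invokes, and your handling of the integrability and ``almost every $t$'' qualifiers is if anything slightly more careful.
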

%====================================================================
\begin{proof}
The standard properties of iterated conditional expectations, strict convexity of the squared Euclidean norm $|\cdot|^2$ and Jensen's inequality imply that
$    \bE (|h_t|^2)
     =
    \bE \bE (|h_t|^2 | X_t)
     \>
    \bE (|\overline{h}_t(X_t)|^2)
$,
where the inequality becomes an equality if and only
if (\ref{hmarkov}) holds with probability one.
Integration over $t\in [0,T]$
yields (\ref{EMarkov}) whose
right-hand side is completely specified by the
functions $\overline{h}_t$ and $p_t$. Since the state PDFs
remain unchanged under the Markovization of the noise strategy,
then (\ref{EMarkov}) holds as an equality if and only if
$h$ is Markov.
\end{proof}
%====================================================================

%%%%%%%%%%%%%%%%%%%%%%%%%%%%%%%%%%%%%%%%%%%%%%%%%%%%%%%%%%%%%%%%%%%%%%%%%%%%%%
\section{Relative entropy dissipation inequality\label{sec:dissipation}}
%%%%%%%%%%%%%%%%%%%%%%%%%%%%%%%%%%%%%%%%%%%%%%%%%%%%%%%%%%%%%%%%%%%%%%%%%%%%%%

We will now consider the {\it state relative entropy} defined by
\begin{equation}
\label{R}
    R_t :=
    \bE
    \ln
    q_t(X_t)
    =
    \Bra
        p_t,
        \ln q_t
    \Ket,
\end{equation}
where $
    \bra
        a, b
    \ket
    :=
    \int_{\mR^n}
    a(x)^{\rT}b(x)
    \rd x
$ is the inner product of functions $a,b:\mR^n \to
\mR^r$ (provided the integral exists, as is the case, for example, when $a$, $b$ are square integrable), and
\begin{equation}
\label{q}
    q_t
    :=
    p_t/p_*
\end{equation}
is the true-to-nominal state PDF ratio.
Note the difference between
$R_t$ and the noise relative entropy $E_t$ defined in
(\ref{E}); see \cite[Eq.~(3.12) \&
Remark on p.~321]{DaiPra_1991}. In the nominal case,
 $R_t$ is non-increasing
in $t$, which represents the Second Law of
Thermodynamics for homogeneous Markov processes as models of
isolated systems \cite{CT_2006}. A  nonzero $h$ makes (\ref{gaussXV}) an open system and $R_t$ is no longer monotonic.

%====================================================================
\begin{theorem}
\label{th:dissipation} For any $T>0$, the increment $R_T - R_0$ of
the state relative entropy (\ref{R}) satisfies
\begin{equation}
\label{dissipation}
    R_T
    -
    R_0
     \<
    E_T
    -
    \frac{1}{2}
    \int_{0}^{T}
    \bra
        p_t,
        |
            \overline{h}_t
            -
            B^{\rT}
            \nabla \ln q_t
        |^2
    \ket
    \rd t,
\end{equation}
where $\overline{h}_t$ and $q_t$ are defined by
(\ref{hover}) and (\ref{q}). This inequality
is an equality if and only if $h$ is a Markov  noise strategy.
\end{theorem}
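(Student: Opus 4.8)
The plan is to differentiate the state relative entropy $R_t$ in time, insert the Fokker--Planck equation (\ref{FPKh}), reduce the integrand to a completed square, and then invoke the Markovization bound of Theorem \ref{th:markovization} as the only inequality. First I would note that, since $q_t=p_t/p_*$ with $p_*$ independent of $t$, the product rule gives $\d_t(p_t\ln q_t)=(\d_t p_t)\ln q_t+\d_t p_t$, while $\int_{\mR^n}\d_t p_t\,\rd x=\d_t\!\int_{\mR^n}p_t\,\rd x=0$; hence $\frac{\rd}{\rd t}R_t=\bra \d_t p_t,\ln q_t\ket$. Substituting (\ref{FPKh}) splits this into a \emph{nominal} part $\bra\cL^{\dagger}(p_t),\ln q_t\ket$ and a \emph{drift} part $-\bra\div(B\overline{h}_t p_t),\ln q_t\ket$.

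For the drift part, a single integration by parts (with no boundary term, by the Gaussian-type decay of $p_t$) gives $-\bra\div(B\overline{h}_t p_t),\ln q_t\ket=\bra p_t\overline{h}_t,\, B^{\rT}\nabla\ln q_t\ket$. The crux is the nominal part, where I would pass to the probability-flux form $\cL^{\dagger}(p)=-\div J$ with $J:=fp-\tfrac{1}{2}D\nabla p$, so that stationarity $\cL^{\dagger}(p_*)=0$ reads $\div J_*=0$, $J_*:=fp_*-\tfrac{1}{2}D\nabla p_*$. Writing $\nabla p_t=p_t(\nabla\ln q_t+\nabla\ln p_*)$ rearranges the flux as $J_t=q_t J_*-\tfrac{1}{2}Dp_t\nabla\ln q_t$, and integrating $\bra\cL^{\dagger}(p_t),\ln q_t\ket=\int_{\mR^n} J_t^{\rT}\nabla\ln q_t\,\rd x$ by parts produces two contributions. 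The stationary-flux contribution $\int_{\mR^n} q_t J_*^{\rT}\nabla\ln q_t\,\rd x=\int_{\mR^n} J_*^{\rT}\nabla q_t\,\rd x=-\int_{\mR^n}(\div J_*)\,q_t\,\rd x$ vanishes \emph{precisely} because $\cL^{\dagger}(p_*)=0$, leaving the entropy-dissipation identity $\bra\cL^{\dagger}(p_t),\ln q_t\ket=-\tfrac{1}{2}\bra p_t,\,|B^{\rT}\nabla\ln q_t|^2\ket$ (using $D=BB^{\rT}$). I expect this vanishing of the stationary-flux term --- together with justifying differentiation under the integral and the boundary-free integrations by parts, which rest on the smoothness assumed in Proposition \ref{prop:state_PDF_evolution} and the decay of the densities --- to be the main technical obstacle.

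Combining the two parts and setting $u:=\overline{h}_t$, $v:=B^{\rT}\nabla\ln q_t$, the integrand becomes $u^{\rT}v-\tfrac{1}{2}|v|^2=\tfrac{1}{2}|u|^2-\tfrac{1}{2}|u-v|^2$, so that $\frac{\rd}{\rd t}R_t=\tfrac{1}{2}\bra p_t,|\overline{h}_t|^2\ket-\tfrac{1}{2}\bra p_t,|\overline{h}_t-B^{\rT}\nabla\ln q_t|^2\ket$ as an \emph{exact} identity. Integrating over $[0,T]$ expresses $R_T-R_0$ in closed form, and the inequality (\ref{dissipation}) follows by bounding $\tfrac{1}{2}\int_0^T\bra p_t,|\overline{h}_t|^2\ket\,\rd t=\tfrac{1}{2}\int_0^T\bE(|\overline{h}_t(X_t)|^2)\,\rd t\<E_T$ through Theorem \ref{th:markovization}. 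Since the preceding identity is exact and the only inequality used is the Markovization bound, the equality case is inherited directly: (\ref{dissipation}) holds with equality if and only if $h$ is a Markov strategy.
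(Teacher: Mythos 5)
Your proposal is correct and follows essentially the same route as the paper: differentiate $R_t$, insert the Fokker--Planck equation (\ref{FPKh}), reduce to the exact identity $\d_t R_t=\tfrac{1}{2}\bra p_t,\,|\overline{h}_t|^2-|\overline{h}_t-B^{\rT}\nabla\ln q_t|^2\ket$ by completing the square, and let Theorem~\ref{th:markovization} supply the only inequality together with its equality case. The single point of divergence is cosmetic: where you evaluate $\bra\cL^{\dagger}(p_t),\ln q_t\ket=-\tfrac{1}{2}\bra p_t,\|\nabla\ln q_t\|_D^2\ket$ via the probability-flux decomposition $J_t=q_tJ_*-\tfrac{1}{2}Dp_t\nabla\ln q_t$ and $\div J_*=0$, the paper instead uses Fleming's logarithmic transformation $\cL(\ln\psi)=\cL(\psi)/\psi-\|\nabla\ln\psi\|_D^2/2$ together with $\bra p_*,\cL(q_t)\ket=\bra\cL^{\dagger}(p_*),q_t\ket=0$ --- both hinge on the same stationarity of $p_*$ and yield the same identity.
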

%====================================================================
\begin{proof} Differentiation of the right-hand side of
(\ref{R}) gives
\begin{equation}
\label{diffR1}
    \d_t R_t
    =
    \bra
        \d_t p_t,
        \ln q_t
    \ket
    +
    \bra
        p_t,
        p_t^{-1}\d_t p_t
    \ket
    =
    \bra
        \cL^{\dagger}(p_t)
        -\div (g \overline{h}_t p_t),
        \ln q_t
    \ket.
\end{equation}
Here, use is made of (\ref{FPKh}) and
the identities $\d_t \ln q_t = p_t^{-1}\d_t p_t$ and $\bra 1, \d_tp_t\ket = 0$ which follow
from (\ref{q}) and $\bra 1, p_t\ket =
1$. Integration by parts reduces (\ref{diffR1}) to
\begin{equation}
\label{diffR2}
    \d_t R_t
     =
    \bra
        p_t,
        \cL(\ln q_t)
    \ket
    +
    \bra
        g\overline{h}_t p_t,
        \nabla \ln q_t
    \ket.
\end{equation}
In view of Fleming's logarithmic transformation \cite{Fleming_1982}
(see, also, \cite[Eq.~(81) on p.~201]{BFP_2002}), the operator
$\cL$, defined by (\ref{cL}), acts on the logarithm of a twice
continuously differentiable function $\psi: \mR^n \to (0,+\infty)$
as
$
\nonumber
    \cL(\ln \psi)
     =
    \cL(\psi)/\psi
    -
    \|\nabla \ln \psi\|_D^2/2
$.
Application of this relation to $\psi := q_t$ from (\ref{q})   represents the first inner product in  (\ref{diffR2}) in the form
\begin{align}
\nonumber
    \bra
            p_t,
            \cL(\ln q_t)
    \ket
     & =
    \bra
        p_t,
        \cL(q_t)/q_t
        -
        \|\nabla \ln q_t\|_D^2/2
    \ket\\
\label{diffR3}
     & =
    \bra
            p_*,
            \cL(q_t)
    \ket
    -
    \bra
            p_t,
            \|
                \nabla \ln q_t
            \|_D^2
    \ket/2
     =
    -
    \bra
            p_t,
            \|
                \nabla \ln q_t
            \|_D^2
    \ket/2,
\end{align}
where $\bra p_*, \cL(q_t)\ket = \bra\cL^{\dagger}(p_*), q_t\ket = 0$.
By substituting (\ref{diffR3}) into
(\ref{diffR2}), it follows that
\begin{equation}
\label{diffR4}
    \d_tR_t
      =
    \bra
        p_t,
        \overline{h}_t^{\rT} B^{\rT}\nabla \ln q_t
        -
        \|\nabla \ln q_t\|_D^2/2
    \ket
      =
    \bra
        p_t,
        |\overline{h}_t|^2
    -
        |
            \overline{h}_t
            -
            B^{\rT} \nabla \ln q_t
        |^2
    \ket
/2,
\end{equation}
where the square is completed using (\ref{DBB}). Integration of
both parts of (\ref{diffR4}) in $t$ over $[0,T]$ yields
\begin{equation}
\label{diffR5}    
R_T
    -
    R_0
    +
    \frac{1}{2}
    \int_{0}^{T}
    \bra
        p_t,
        |
        \overline{h}_t
        -
        B^{\rT}
        \nabla \ln q_t
        |^2
    \ket
    \rd t
=
    \frac{1}{2}
    \int_{0}^{T}
    \bE
    (
        |\overline{h}_t(X_t)|^2
    )
    \rd t
    \<
    E_T,
\end{equation}
where the inequality proves (\ref{EMarkov}). Now,
(\ref{dissipation}) follows from (\ref{diffR5}). The
claim that (\ref{dissipation})
holds as an equality if and only if $h$ is Markov, follows from the second part of
Theorem~\ref{th:markovization}.
\end{proof}
%====================================================================

The relation (\ref{dissipation}) can be regarded as a {\it
dissipation inequality} \cite[pp.~327, 348]{Willems_1972}, with  the state relative entropy (\ref{R}) playing the role
of a {\it storage function}.
Thus, the absolute continuity of the state distribution of the system can
only be destroyed within the finite time $T$ using an infinite noise
relative entropy supply $E_T$. Note also that  (\ref{dissipation}),  which becomes an
equality for Markov noise strategies, can be thought of as an
analogue  of Jarzynski's equality from nonequilibrium thermodynamics
\cite{Jarzynski_1996}.

%%%%%%%%%%%%%%%%%%%%%%%%%%%%%%%%%%%%%%%%%%%%%%%%%%%%%%%%%%%%%%%%%%%%%%%%%%%%%%%
\section{Gaussian state PDF transition\label{sec:gauss_transition}}
%%%%%%%%%%%%%%%%%%%%%%%%%%%%%%%%%%%%%%%%%%%%%%%%%%%%%%%%%%%%%%%%%%%%%%%%%%%%%%%

We will now consider the problem of driving the system (\ref{gaussXV})
from a Gaussian initial state PDF $p_0 := \sigma$ to a
Gaussian terminal state PDF $p_T := \theta$ at a specified time
 $T > 0$  so as to minimize the supply
(\ref{E}) over the interval $[0,T]$:
\begin{equation}
\label{J}
    J_T(\sigma,\theta)
    :=
    \inf
    \{
        E_T:\
        p_0 = \sigma,\
        p_T = \theta
    \}.
\end{equation}
Here,
\begin{equation}
\label{gauss_start_finish}
    \sigma
    \sim
    \cN(\alpha_0, \Pi_0),
    \qquad
    \theta
    \sim
    \cN(\alpha_T, \Pi_T),
\end{equation}
with $\Pi_0, \Pi_T\succ 0$.
The required
supply (\ref{J}) vanishes if and only if $\theta$ is {\it nominally reachable} from $\sigma$ in time $T$ in the sense that $\theta = \re^{\cL^{\dagger}T}(\sigma)$. Here, $\re^{\cL^{\dagger}T}$ is the linear integral
operator (with a Markov transition kernel) which relates the
terminal state PDF $p_T$ of the system at time $T$ with the initial
state PDF $p_0$ under the nominal FPKE (\ref{FPK}). In
particular,  $
    J_T(p_*, p_*)
    =
    0
$ and, more generally,
$    J_T
    (
        \cN(\alpha,\Sigma),\,
        \cN(M_T(\alpha), C_T(\Sigma))
    ) = 0
$.
In (\ref{J}), the intermediate state PDFs $p_t$, with $0<t<T$, are not
required to be Gaussian. Nevertheless, we can
restrict attention to noise strategies which are not
only Markov, but are also affine with respect to the
state of the system, thus making the intermediate state PDFs also
Gaussian; see Fig.~\ref{fig:gauss_path}.
%==============================================================================
\begin{figure}[htb]
\begin{center}
\includegraphics[width=7cm]{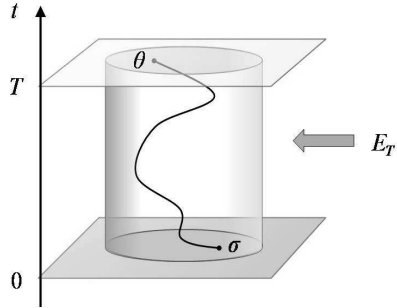}
\end{center}
\caption{ 
    \label{fig:gauss_path}
    The cylinder represents the set $\mG \x [0,T]$ whose base $\mG$ is formed by
    nonsingular Gaussian
    PDFs in $\mR^n$. The minimization of the noise relative entropy supply
    $E_T$ in driving the system state PDF between $\sigma, \theta\in
    \mG$ in time $T$
     can be restricted  to affine Markov noise strategies without affecting the minimum. Such
     strategies
     generate paths of Gaussian state PDFs from $\sigma$ to
    $\theta$ which are entirely contained by the cylinder.
}
\end{figure}
%==============================================================================

%%%%%%%%%%%%%%%%%%%%%%%%%%%%%%%%%%%%%%%%%%%%%%%%%%%%%%%%%%%%%%%%%%%%%%%%%%%%%%%
\section{Mean-covariance separation principle \label{sec:meancovseparation}}
%%%%%%%%%%%%%%%%%%%%%%%%%%%%%%%%%%%%%%%%%%%%%%%%%%%%%%%%%%%%%%%%%%%%%%%%%%%%%%%

For any $T>0$, we define a function $S_T: \mP_n^2
\to \mR_+$, with $\mP_n$ the set of real positive definite symmetric matrices of order $n$, by
\begin{equation}
\label{S}
    S_T(\Sigma, \Theta)
    :=
    \inf%_{\Pi_0=\Sigma,\, \Pi_T = \Theta\, \&\,  (\ref{Pidot})}
        \int_{0}^{T}
        \Tr( K_t \Pi_t K_t^{\rT})
        \rd t.
\end{equation}
Here, the minimization is over $\mR^{m\x n}$-valued functions
$K_t$ from (\ref{betaK}) such that the state covariance
matrix $\Pi_t$, governed by the second of the ordinary differential equations (ODEs)  (\ref{alphaPidot}) and initialized at
$\Pi_0 := \Sigma$, satisfies the terminal condition $\Pi_T =
\Theta$. Since $C_T(\Sigma)$ is reachable from $\Sigma$ in
time $T$ with $K_t \equiv 0$ by the action of the nominal
state covariance semigroup (\ref{C}),  then
\begin{equation}
\label{S0}
    S_T(\Sigma, C_T(\Sigma))
    =
    0.
\end{equation}
%====================================================================
\begin{theorem}
\label{th:split} The minimum required noise relative entropy supply in (\ref{J})--(\ref{gauss_start_finish})
can be computed as
\begin{equation}
\label{Jsplit}
    J_T(\sigma, \theta)
    =
    (
        \|\alpha_T - M_T(\alpha_0)\|_{\Gamma_T^{-1}}^2
        +
        S_T(\Pi_0, \Pi_T)
    )/2,
\end{equation}
where (\ref{Gamma}),  (\ref{M}), (\ref{S}) are used. The optimal noise strategy is an
affine function of the current state of the system:
\begin{equation}
\label{hlinear}
    h_t
    =
    \beta_t + K_t (X_t-\alpha_t),
\end{equation}
where $K_t$ is a function delivering the minimum in
 (\ref{S}), and
\begin{equation}
\label{betaoptimal}
    \beta_t
    :=
    B^{\rT}
    \re^{A^{\rT}(T-t)}
    \Gamma_T^{-1}
    (\alpha_T - M_T(\alpha_0)),
    \qquad
    0\< t \< T,
\end{equation}
\end{theorem}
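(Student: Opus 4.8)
The plan is to reduce the optimization to affine Markov noise strategies and then exploit the decoupling of the mean and covariance dynamics in (\ref{alphaPidot}). By Theorem~\ref{th:markovization} it suffices to consider Markov strategies, so I would write $h_t = \beta_t + K_t(X_t - \alpha_t) + r_t$ with $\beta_t$, $K_t$ as in (\ref{betaK}) and $r_t$ the residual satisfying $\bE r_t = 0$ and $\cov(r_t, X_t) = 0$. The first step is the Pythagorean identity $\bE(|h_t|^2) = |\beta_t|^2 + \Tr(K_t \Pi_t K_t^\rT) + \bE(|r_t|^2)$, in which the cross terms vanish by the orthogonality of $r_t$ to the affine span of $X_t$ and by $\bE(X_t - \alpha_t) = 0$. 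For the lower bound, any admissible Markov strategy then satisfies $E_T \> \tfrac{1}{2}\int_0^T(|\beta_t|^2 + \Tr(K_t\Pi_t K_t^\rT))\,\rd t$, and its $(\alpha_t,\Pi_t)$ solve (\ref{alphaPidot}) with the prescribed endpoints; for achievability, the affine strategy (\ref{hlinear}) built from the optimal $(\beta_t,K_t)$ keeps the state PDF Gaussian so that $p_0=\sigma$, $p_T=\theta$ reduce to matching $(\alpha_0,\Pi_0)$ and $(\alpha_T,\Pi_T)$, and attains the bound with $r_t\equiv 0$.

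The crucial structural point is that (\ref{alphaPidot}) decouples: the mean ODE $\dot{\alpha}_t = \cM(\alpha_t) + B\beta_t$ involves only $\beta_t$, while the covariance ODE $\dot{\Pi}_t = \cC(\Pi_t) + BK_t\Pi_t + \Pi_t K_t^\rT B^\rT$ involves only $K_t$. Hence minimizing $\tfrac{1}{2}\int_0^T \bE(|h_t|^2)\,\rd t$ splits into two independent problems. The covariance problem is, by the definition (\ref{S}), exactly $\tfrac{1}{2}S_T(\Pi_0, \Pi_T)$, attained by some optimizing $K_t$.

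For the mean problem I would solve the minimum-energy control problem of minimizing $\tfrac{1}{2}\int_0^T |\beta_t|^2\,\rd t$ subject to the affine mean dynamics with fixed endpoints $\alpha_0$, $\alpha_T$. Integrating the mean ODE and using (\ref{M}) gives $\alpha_T = M_T(\alpha_0) + \int_0^T \re^{A(T-s)} B\beta_s\,\rd s$, so the terminal constraint reads $\int_0^T \re^{A(T-s)} B \beta_s\,\rd s = \alpha_T - M_T(\alpha_0)$. A Lagrange-multiplier (normal-equations) argument yields the optimizer $\beta_s = B^\rT \re^{A^\rT(T-s)}\lambda$; substituting it into the constraint identifies $\int_0^T \re^{A(T-s)} D \re^{A^\rT(T-s)}\,\rd s = \Gamma_T$ from (\ref{Gamma}), whence $\lambda = \Gamma_T^{-1}(\alpha_T - M_T(\alpha_0))$. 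This reproduces (\ref{betaoptimal}) and gives the optimal mean cost $\tfrac{1}{2}\|\alpha_T - M_T(\alpha_0)\|_{\Gamma_T^{-1}}^2$. Adding the two parts yields (\ref{Jsplit}), and combining the two optimizers gives the affine optimal strategy (\ref{hlinear}).

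The main obstacle is the rigorous reduction to affine strategies: one must argue that setting $r_t \equiv 0$ neither raises the supply nor breaks the terminal constraint. This hinges on the fact that the mean and covariance dynamics (\ref{alphaPidot}) are insensitive to $r_t$ because they are governed solely by the first two conditional moments $\beta_t$, $K_t$; replacing any admissible Markov strategy by its affine part with the same $\beta_t$, $K_t$ preserves $(\alpha_t, \Pi_t)$, hence keeps $p_T = \cN(\alpha_T, \Pi_T) = \theta$ Gaussian while deleting the nonnegative term $\tfrac{1}{2}\int_0^T \bE(|r_t|^2)\,\rd t$. Once this reduction and the decoupling in (\ref{alphaPidot}) are in place, the remaining computations (the Pythagorean split and the minimum-energy control) are routine.
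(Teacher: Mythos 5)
Your proposal is correct and follows essentially the same route as the paper: a reduction to affine (stochastically linearized) Markov strategies via a second-moment lower bound, followed by the decoupled mean/covariance optimization, with the mean part solved as a minimum-energy control problem yielding (\ref{betaoptimal}). Your Pythagorean decomposition $h_t=\beta_t+K_t(X_t-\alpha_t)+r_t$ is just an explicit form of the paper's Schur-complement inequality $\cov(h_t)\succcurlyeq K_t\Pi_t K_t^{\rT}$, so the two arguments coincide in substance.
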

%====================================================================
\begin{proof}
It follows from (\ref{alphaPidot}) that the state mean
$\alpha_t$  is completely
specified by the initial condition $\alpha_0$ and the function
$\beta_t$ from (\ref{betaK}). Similarly, the state covariance matrix $\Pi_t$ is
completely specified by $\Pi_0$ and the function $K_t$. The relative entropy supply (\ref{E}) affords the lower bound:
\begin{equation}
\label{Ebound}
    2E_T
    =
    \int_{0}^{T}
    \bE(|h_t|^2)
    \rd t
    \>
    \int_{0}^{T}
    (
        |\beta_t|^2
        +
        \Tr(K_t\Pi_t K_t^{\rT})
    )
    \rd t.
\end{equation}
Indeed, by \cite[Theorem~7.7.7 on p.~473]{HJ_2007},  the Schur
complement of the block $\cov(X_t)$ in the joint covariance matrix 
of $X_t$ and $h_t$ is positive semi-definite. Hence,  $
    \cov(h_t)
    \succcurlyeq
    \cov(h_t, X_t)
    (\cov(X_t))^{-1}
    \cov(X_t, h_t)
    =
    K_t \Pi_t K_t^{\rT}
$ in view of (\ref{alphaPi})
and (\ref{betaK}), and
$    \bE(|h_t|^2)
    =
    |\beta_t|^2 + \Tr \cov(h_t)
    \>
    |\beta_t|^2 + \Tr(K_t \Pi_t K_t^{\rT})
$,
which implies (\ref{Ebound}). This inequality
becomes an equality if and only if $h_t$ is related  to $X_t$ by the
affine map (\ref{hlinear}) with probability one.  
In view of
(\ref{alphaPi}) and (\ref{betaK}),  the passage from the original
noise strategy $h$ to the right-hand side of (\ref{hlinear})
describes a {\it stochastic linearization} of $h$. We
denote this operation by $\bL$. By construction, it is idempotent:
$\bL^2 = \bL$. The stochastic linearization $\bL$ yields an affine
noise strategy which is not only Markov in the sense of (\ref{hmarkov}), but also
preserves the first two moments of $X$. Under the affine
noise strategy, the state process $X$ is Gaussian, provided that the
initial state $X_0$ is Gaussian. Thus, if $h$
is a noise strategy which drives the state PDF of the system from
$\sigma$ to $\theta$, described by (\ref{gauss_start_finish}), in
time $T$, then  $\wh{h}:= \bL(h)$ is an affine Markov strategy
under which $X$ has the same
mean and covariance matrix as it does under $h$. By the latter
property, $\wh{h}$ also drives the state PDF of the system to the
Gaussian PDF $\theta$, but supplying the same or smaller noise
relative entropy to the system over $[0,T]$ in view of (\ref{Ebound}). Therefore,  the minimization of $E_T$ can be reduced,
without affecting the minimum value, to a minimization over affine Markov noise
strategies (\ref{hlinear}). Hence, recalling
(\ref{gauss_start_finish}) and (\ref{S}),
\begin{equation}
\label{Jsplit1}
    2 J_T(\sigma,\theta)
     =
    \inf
        \int_{0}^{T}
        (
            |\beta_t|^2
            +
            \Tr(K_t \Pi_t K_t^{\rT})
        )
        \rd t
    =
    S_T(\Pi_0, \Pi_T)
    +
    \inf
    \int_{0}^{T}
    |\beta_t|^2\rd t.
\end{equation}
Here, the first infimum is over the functions $\beta_t$
and $K_t$ from (\ref{betaK}) such that the state mean $\alpha_t$ and
covariance matrix $\Pi_t$, governed by the ODEs (\ref{alphaPidot}) with initial conditions $\alpha_0$ and $\Pi_0$,
satisfy the terminal conditions specified by $\alpha_T$ and $\Pi_T$, whereas the second infimum
is only concerned with $\beta_t$.
By the first of the ODEs (\ref{alphaPidot}), the boundary
conditions on the state mean are equivalent to
$
    \int_{0}^{T}\re^{A(T-t)} B \beta_t
    \rd t
    =
    \alpha_T
    -
    M_T(\alpha_0),
$
where (\ref{M}) is used.
The second infimum in (\ref{Jsplit1}) is found by solving the linearly constrained
quadratic optimization problem and achieved at the function $\beta_t$ from (\ref{betaoptimal}), with
$    \min
        \int_{0}^{T}
            |\beta_t|^2
        \rd t
=
    \|
        \alpha_T
        -
        M_T(\alpha_0)
    \|_{\Gamma_T^{-1}}^2
$, which yields
(\ref{Jsplit}). \end{proof}
%====================================================================

The representation (\ref{Jsplit}) splits $J_T$ into two
independently computed  terms which are associated with the mean and
the covariance matrix of the system state. This decoupling is
similar to the separation principle of Linear Quadratic Gaussian  control \cite{KS_1972}.

%%%%%%%%%%%%%%%%%%%%%%%%%%%%%%%%%%%%%%%%%%%%%%%%%%%%%%%%%%%%%%%%%%%%%%%%%%%%%%%
\section{Covariance Hamilton-Jacobi equation\label{sec:covHJ}}
%%%%%%%%%%%%%%%%%%%%%%%%%%%%%%%%%%%%%%%%%%%%%%%%%%%%%%%%%%%%%%%%%%%%%%%%%%%%%%%

We will now consider the ``covariance'' part $S_T$ of the minimum
required supply $J_T$ from (\ref{Jsplit}) whose ``mean'' part is already computed in  Theorem~\ref{th:split}.

%====================================================================
\begin{lemma}
\label{lem:HJ_S} Suppose that $S_T(\Sigma, \Theta)$ from
(\ref{S}) is smooth with respect to
$T>0$ and $\Sigma\succ 0$.
Then it satisfies the HJE
\begin{equation}
\label{HJ_S}
    \d_T S_T
    =
    F
    (
        \Sigma,
        \d_{\Sigma} S_T
    ),\qquad
    F(\Sigma, \Phi)
    :=
    \Tr
    (
        (
            \cC(\Sigma)
            -
            D
            \Phi
            \Sigma
        )
        \Phi
    ),
\end{equation}
where (\ref{DBB}) and
 (\ref{MCdot}) are used.
\end{lemma}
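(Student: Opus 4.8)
The plan is to read $S_T(\Sigma,\Theta)$ from (\ref{S}) as the value function of a finite-horizon optimal control problem whose state $\Pi_t\in\mP_n$ evolves by the second ODE in (\ref{alphaPidot}), whose control is the gain $K_t\in\mR^{m\x n}$, whose running cost is $\Tr(K_t\Pi_t K_t^\rT)$, and which carries the hard terminal constraint $\Pi_T=\Theta$; the target equation (\ref{HJ_S}) is then the associated Hamilton-Jacobi-Bellman equation. Throughout I hold $\Theta$ fixed and write $\Phi:=\d_\Sigma S_T$ for the gradient of $S_T$ in the initial covariance $\Sigma$ taken in the trace inner product, so that $\Phi$ is a symmetric matrix and the chain rule reads $\tfrac{\rd}{\rd t}S_T(\Pi_t)=\Tr(\Phi\,\dot\Pi_t)$.

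First I would establish Bellman's principle of optimality: optimality of the tail of an optimal trajectory gives, for $0<\delta<T$,
\begin{equation*}
    S_T(\Sigma,\Theta)
    =
    \inf_{K}
    \Big(
        \int_0^\delta \Tr(K_t\Pi_t K_t^\rT)\,\rd t
        +
        S_{T-\delta}(\Pi_\delta,\Theta)
    \Big),
\end{equation*}
with $\Pi_0=\Sigma$. Next I would pass to the infinitesimal limit $\delta\to 0^+$. Using the assumed smoothness of $S_T$ to write $S_{T-\delta}(\Pi_\delta,\Theta)\approx S_T(\Sigma,\Theta)-\delta\,\d_T S_T+\Tr(\Phi(\Pi_\delta-\Sigma))$ together with $\Pi_\delta-\Sigma\approx\delta(\cC(\Sigma)+BK\Sigma+\Sigma K^\rT B^\rT)$ and $\int_0^\delta\Tr(K_t\Pi_t K_t^\rT)\,\rd t\approx\delta\,\Tr(K\Sigma K^\rT)$, cancelling $S_T(\Sigma,\Theta)$ and dividing by $\delta$ yields the stationarity relation
\begin{equation*}
    \d_T S_T
    =
    \inf_{K\in\mR^{m\x n}}
    \Big(
        \Tr(K\Sigma K^\rT)
        +
        \Tr\big(\Phi(\cC(\Sigma)+BK\Sigma+\Sigma K^\rT B^\rT)\big)
    \Big),
\end{equation*}
where the decision variable is now the pointwise value $K$ of the control.

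The remaining step is the explicit inner minimization over $K$. Using symmetry of $\Sigma$ and $\Phi$, cyclicity of the trace and $\Tr(N)=\Tr(N^\rT)$, the two cross terms coincide, so I can complete the square,
\begin{equation*}
    \Tr(K\Sigma K^\rT)+2\Tr(BK\Sigma\Phi)
    =
    \Tr\big((K+B^\rT\Phi)\Sigma(K+B^\rT\Phi)^\rT\big)
    -
    \Tr(B^\rT\Phi\Sigma\Phi B).
\end{equation*}
Since $\Sigma\succ0$, the first term on the right is nonnegative and vanishes exactly at $K^\ast=-B^\rT\Phi$, which is the optimal feedback gain $K_t$ entering (\ref{hlinear}). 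Substituting $K^\ast$ and using $D=BB^\rT$ from (\ref{DBB}) collapses the right-hand side to $\Tr(\cC(\Sigma)\Phi)-\Tr(D\Phi\Sigma\Phi)=\Tr\big((\cC(\Sigma)-D\Phi\Sigma)\Phi\big)=F(\Sigma,\Phi)$, which is exactly (\ref{HJ_S}).

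The hard part will be the rigorous justification of the dynamic programming step in the presence of the \emph{equality} terminal constraint $\Pi_T=\Theta$: unlike a soft-penalty problem, $S_T$ can equal $+\infty$ and may fail to be differentiable on the boundary of its effective domain, so the Taylor expansion and the interchange of the limit $\delta\to0^+$ with the infimum are legitimate only on the open region where the smoothness hypothesis of the lemma holds. A secondary, but purely routine, point is the matrix calculus itself—confirming that $\d_\Sigma S_T$ is symmetric in the trace pairing and tracking the noncommutativity of $\Sigma$, $\Phi$ and $B$ while completing the square.
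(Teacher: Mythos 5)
Your proposal is correct and follows essentially the same route as the paper: the paper likewise asserts the Hamilton--Jacobi--Bellman equation $\d_T S_T = \Tr(\cC(\Sigma)\Phi_T) + \min_K(\Tr(K\Sigma K^{\rT}) + \Tr((BK\Sigma+\Sigma K^{\rT}B^{\rT})\Phi_T))$ directly from (\ref{alphaPidot}), (\ref{S}) and the smoothness hypothesis, and then completes the square exactly as you do to find the unique minimizer $K=-B^{\rT}\Phi_T$ and the minimum $-\Tr(D\Phi_T\Sigma\Phi_T)$. The only difference is presentational: you spell out the Bellman principle and the infinitesimal limit that the paper leaves implicit, and your closing caveat about justifying dynamic programming under the equality terminal constraint is precisely what the lemma's smoothness assumption is there to absorb.
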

%====================================================================
\begin{proof} From (\ref{alphaPidot}) and (\ref{S}), it follows that,  under the assumption of smoothness, $S_T$ satisfies a Hamilton-Jacobi-Bellman equation
\begin{equation}
\label{HJB_S}
    \d_T S_T
     =
    \Tr
    (
        \cC(\Sigma)
        \Phi_T
    )
+
    \min_{K \in \mR^{m\x n}}
    (
        \Tr(K\Sigma K^{\rT})
        +
         \Tr(
            (
                BK\Sigma + \Sigma K^{\rT} B^{\rT}
            )
            \Phi_T
        )
    ),
\end{equation}
where     $\Phi_T
    :=
    \d_{\Sigma}S_T$. Since $
    K\Sigma K^{\rT}
    +
    K \Sigma \Phi B + B^{\rT} \Phi \Sigma K^{\rT}
     =
    (K+B^{\rT}\Phi)\Sigma (K^{\rT}+\Phi B)-B^{\rT} \Phi\Sigma \Phi
    B
$ and $\Sigma\succ 0$, the minimum in (\ref{HJB_S})   is only achieved at
$    K = -B^{\rT} \Phi_T
$ and is equal to $    -\Tr
    (
        D\Phi_T\Sigma \Phi_T
    )
$,
which implies
 (\ref{HJ_S}).
\end{proof}
%==============================================================================

%%%%%%%%%%%%%%%%%%%%%%%%%%%%%%%%%%%%%%%%%%%%%%%%%%%%%%%%%%%%%%%%%%%%%%%%%%%%%%%
\section{Correction scheme \label{sec:correction_scheme}}
%%%%%%%%%%%%%%%%%%%%%%%%%%%%%%%%%%%%%%%%%%%%%%%%%%%%%%%%%%%%%%%%%%%%%%%%%%%%%%%

The Hamiltonian $F$ in
 (\ref{HJ_S})  is quadratic in its second
 argument. Hence, if $\wh{S}_T$ satisfies the covariance HJE, then
\begin{equation}
\label{SSS}
    S_T
    :=
    \wh{S}_T + \wt{S}_T
\end{equation}
is also a solution of (\ref{HJ_S}) if and only if  $\wt{S}_T$,
playing the role  of a correcting function, satisfies a modified HJE
\begin{equation}
\label{Finter1}
    \d_T \wt{S}_T
     =
    \wh{F}
    (
        \Sigma,
        \d_{\Sigma}\wt{S}_T
    ).
\end{equation}
Here,  $\wh{F}$
is obtained by correcting the Hamiltonian $F$ by a term arising from the bilinear
``interaction'' between $\wh{S}_T$ and $\wt{S}_T$:
\begin{equation}
\label{Finter2}
    \wh{F}
    (
        \Sigma,
        \Phi
    )
    :=
    F
    (
        \Sigma,
        \Phi
    )
    -
    2
    \Tr
    (
        D
        (\d_{\Sigma}\wh{S}_T)
        \Sigma
        \Phi
    )
    =
    \Tr
    (
        (
            \wh{\cC}(\Sigma)
            -
            D
            \Phi\Sigma
        )
        \Phi
    ),
\end{equation}
with
\begin{equation}
\label{Finter3}
    \wh{\cC}(\Sigma)
     :=
     (
        A-D\d_{\Sigma}\wh{S}_T
     )
     \Sigma
     +
     \Sigma
     (
        A-D\d_{\Sigma}\wh{S}_T
     )^{\rT}
     +D
\end{equation}
obtained by modifying the matrix $A$ in the infinitesimal generator
$\cC$ from (\ref{MCdot}). The operator $\wh{\cC}$ depends
parametrically  on $T$ and $\Sigma$ through $\wh{S}_T$.

%%%%%%%%%%%%%%%%%%%%%%%%%%%%%%%%%%%%%%%%%%%%%%%%%%%%%%%%%%%%%%%%%%%%%%%%%%%%%%%
\section{Starting solution \label{sec:first_ansatz}}
%%%%%%%%%%%%%%%%%%%%%%%%%%%%%%%%%%%%%%%%%%%%%%%%%%%%%%%%%%%%%%%%%%%%%%%%%%%%%%%

Ignoring, for the moment, the boundary condition (\ref{S0}), we will
find a particular solution $\wh{S}_T$ of the covariance HJE
(\ref{HJ_S}) as a starting point for the correction scheme
(\ref{SSS})--(\ref{Finter3}). Note that application of
the Hamiltonian $F$ from (\ref{HJ_S}) to the function $\Phi:= \d_{\Sigma}\ln\det
\Sigma = \Sigma^{-1}$ yields a constant:
$
    F(\Sigma, \Sigma^{-1})
    =
    \Tr
    (
        (
            A\Sigma
            +
            \Sigma A^{\rT} + D - D\Sigma^{-1} \Sigma
        )
        \Sigma^{-1}
    )
    =
    2\Tr A
$. Furthermore, the class of those smooth functions $S_T(\Sigma,
\Theta)$, which are affine with respect to $\Sigma$, is closed under
$\d_T$ and the action of the Hamiltonian $S_T \mapsto F(\Sigma,
\d_{\Sigma}S_T)$. Finally, solutions of the quadratic HJE
(\ref{HJ_S}) do not obey the superposition principle, and the
bilinear interaction of $\ln\det \Sigma$ and an affine function
$S_T$ is described by
$
    \Tr
    (
        D\Sigma^{-1}
        \Sigma
        \d_{\Sigma}S_T
    )
    =
    \Tr
    (
        D
        \d_{\Sigma}
        S_T
    )
$
which is also constant in $\Sigma$. These  observations suggest
looking for a particular solution of (\ref{HJ_S}) in the class of
functions
\begin{equation}
\label{classShat}
    \wh{S}_T(\Sigma, \Theta)
    =
    \ln\det \Sigma
    +
    \Tr
    (
        \Sigma
        \Xi_T(\Theta)
    )
    +
    \Ups_T(\Theta).
\end{equation}
Here, $\Xi_T(\Theta)$ and $\Ups_T(\Theta)$ are smooth functions of
the time horizon $T> 0$ and the terminal state covariance matrix
$\Theta$  with values in $\mS_n$ and $\mR$, respectively, with  $\det\Xi_T(\Theta)\ne 0$.

%==============================================================================
\begin{lemma}
\label{lem:Shat} The following function is a particular solution of
the covariance HJE (\ref{HJ_S}) in the class (\ref{classShat}):
\begin{equation}
\label{Shat}
    \wh{S}_T(\Sigma)
    :=
    \ln\det U_T(\Sigma) + \Tr U_T(\Sigma).
\end{equation}
Here, the map $U_T: \mP_n \to \mP_n$ is defined using  (\ref{Gamma}), (\ref{C}) as
\begin{equation}
\label{U}
    U_T(\Sigma)
      :=
    \Gamma_T^{-1/2}
    C_T(\Sigma)
    \Gamma_T^{-1/2}
    -
    I_n.
\end{equation}
%where $C_T$ is the nominal state covariance semigroup in
%(\ref{C}).
\end{lemma}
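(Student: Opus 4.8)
The plan is to verify the covariance HJE (\ref{HJ_S}) directly, after first recasting (\ref{Shat}) in the ansatz class (\ref{classShat}). The essential preliminary observation is that the definition (\ref{U}) collapses: substituting the first representation of $C_T$ in (\ref{C}) cancels the Gramian shift $\Gamma_T$ exactly, leaving a congruence
\begin{equation*}
    U_T(\Sigma)
    =
    \Gamma_T^{-1/2}\re^{AT}\Sigma\re^{A^{\rT}T}\Gamma_T^{-1/2}
    =
    P_T\Sigma P_T^{\rT},
    \qquad
    P_T := \Gamma_T^{-1/2}\re^{AT}.
\end{equation*}
Since $P_T$ is a nonsingular $n\x n$ matrix, $\ln\det U_T(\Sigma)=\ln\det\Sigma+2\ln\det P_T$ and $\Tr U_T(\Sigma)=\Tr(\Sigma P_T^{\rT}P_T)$, so $\wh{S}_T$ lies in (\ref{classShat}) with $\Xi_T := P_T^{\rT}P_T=\re^{A^{\rT}T}\Gamma_T^{-1}\re^{AT}$ (symmetric and nonsingular) and a scalar $\Ups_T := 2\ln\det P_T=-\ln\det\Gamma_T+2T\Tr A$, both independent of $\Theta$. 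This is admissible here because the boundary condition (\ref{S0}) is being ignored at this stage.

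Next I would form the gradient. As $\d_\Sigma\ln\det\Sigma=\Sigma^{-1}$ and $\Xi_T$ is symmetric, $\Phi := \d_\Sigma\wh{S}_T = \Sigma^{-1}+\Xi_T$. Feeding this into the Hamiltonian and using $\cC(\Sigma)=A\Sigma+\Sigma A^{\rT}+D$, the product $D\Phi\Sigma = D+D\Xi_T\Sigma$ cancels the $D$ in $\cC$, so $F(\Sigma,\Phi)=\Tr((A\Sigma+\Sigma A^{\rT}-D\Xi_T\Sigma)(\Sigma^{-1}+\Xi_T))$; expanding with cyclic invariance of the trace (and $\Tr(\Sigma A^{\rT}\Sigma^{-1})=\Tr A$) yields
\begin{equation*}
    F(\Sigma,\Phi)
    =
    2\Tr A
    +
    \Tr((A\Sigma+\Sigma A^{\rT})\Xi_T)
    -
    \Tr(D\Xi_T)
    -
    \Tr(D\Xi_T\Sigma\Xi_T).
\end{equation*}

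For the left-hand side I would differentiate $\wh{S}_T=\ln\det\Sigma+\Tr(\Sigma\Xi_T)+\Ups_T$ in $T$; the $\ln\det\Sigma$ term is inert, so $\d_T\wh{S}_T=\d_T\Ups_T+\Tr(\Sigma\,\d_T\Xi_T)$. The two Gramian identities that drive everything are $\dot\Gamma_T=\re^{AT}D\re^{A^{\rT}T}$ (from the integral in (\ref{Gamma})) and, equivalently, $\dot\Gamma_T=\cC(\Gamma_T)=A\Gamma_T+\Gamma_T A^{\rT}+D$ (from the second form in (\ref{Gamma}) with (\ref{Pi*})), together with $\d_T\Gamma_T^{-1}=-\Gamma_T^{-1}\dot\Gamma_T\Gamma_T^{-1}$. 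These give $\d_T\Ups_T=-\Tr(\Gamma_T^{-1}\dot\Gamma_T)+2\Tr A=-\Tr(D\Gamma_T^{-1})$, after noting $\Tr(\Gamma_T^{-1}\dot\Gamma_T)=2\Tr A+\Tr(D\Gamma_T^{-1})$. The same identity also pins the constant term above, since $\Tr(D\Xi_T)=\Tr(\Gamma_T^{-1}\re^{AT}D\re^{A^{\rT}T})=\Tr(\Gamma_T^{-1}\dot\Gamma_T)$, which removes $2\Tr A$ and one of the $\Gamma_T^{-1}$ traces from the right-hand side and reduces it to $-\Tr(D\Gamma_T^{-1})-\Tr(D\Xi_T\Sigma\Xi_T)$.

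The final matching is where the main work lies. Writing $Q:=\re^{AT}$ and $W:=Q\Sigma Q^{\rT}$, so that $\Xi_T\Sigma\Xi_T=Q^{\rT}\Gamma_T^{-1}W\Gamma_T^{-1}Q$, I would establish, via the Lyapunov form of $\dot\Gamma_T$,
\begin{equation*}
    \Tr(D\Xi_T\Sigma\Xi_T)
    =
    \Tr(\dot\Gamma_T\Gamma_T^{-1}W\Gamma_T^{-1})
    =
    \Tr(\Gamma_T^{-1}(AW+WA^{\rT}))
    +
    \Tr(\Gamma_T^{-1}W\Gamma_T^{-1}D).
\end{equation*}
These are exactly the two traces that surface, with opposite sign, when $\Tr(\Sigma\,\d_T\Xi_T)$ is reduced through the conjugation $\Tr(\Sigma Q^{\rT}MQ)=\Tr(WM)$ and cyclic permutations; meanwhile $\Tr(\Sigma(A^{\rT}\Xi_T+\Xi_T A))=\Tr((A\Sigma+\Sigma A^{\rT})\Xi_T)$ supplies the one shared remaining term. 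Assembling $\d_T\wh{S}_T=\d_T\Ups_T+\Tr(\Sigma\,\d_T\Xi_T)$ then reproduces the simplified $F(\Sigma,\Phi)$ verbatim. I expect the chief obstacle to be the bookkeeping of the noncommuting products in $\d_T\Xi_T$ and confirming that the single quartic-in-$\Gamma_T^{-1}$ term $\Tr(D\Xi_T\Sigma\Xi_T)$ emerges with the correct sign; the whole argument hinges on deploying \emph{both} forms of $\dot\Gamma_T$ — the integral form to dispatch $\Tr(D\Xi_T)$ and the Lyapunov form to dispatch the quadratic term — and on the congruence $U_T(\Sigma)=P_T\Sigma P_T^{\rT}$ that scalarizes the $\ln\det$ contribution at the outset.
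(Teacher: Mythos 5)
Your proposal is correct, and every computation in it checks out: the cancellation $U_T(\Sigma)=\Gamma_T^{-1/2}\re^{AT}\Sigma\re^{A^{\rT}T}\Gamma_T^{-1/2}$, the identification $\Xi_T=\re^{A^{\rT}T}\Gamma_T^{-1}\re^{AT}$, $\Ups_T=2T\Tr A-\ln\det\Gamma_T$, the gradient $\Phi=\Sigma^{-1}+\Xi_T$, the reduction of $F(\Sigma,\Phi)$, and the matching of $\d_T\wh{S}_T$ via the two forms $\dot\Gamma_T=\re^{AT}D\re^{A^{\rT}T}=A\Gamma_T+\Gamma_T A^{\rT}+D$. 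However, your route is a \emph{verification} where the paper's is a \emph{derivation}. The paper keeps $\Xi_T$ and $\Ups_T$ unknown, shows that membership in the class (\ref{classShat}) reduces the HJE exactly to the Riccati ODE $\d_T\Xi_T=\Xi_T A+A^{\rT}\Xi_T-\Xi_T D\Xi_T$ together with $\d_T\Ups_T=\Tr(2A-D\Xi_T)$, inverts the Riccati equation into a differential Lyapunov equation for $\Xi_T^{-1}$, writes down the general solution $\Xi_T=\re^{A^{\rT}T}(\Xi_0^{-1}+\Gamma_T)^{-1}\re^{AT}$, and obtains (\ref{U}) in the singular limit $\Xi_0\to\infty$. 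What your approach buys is economy and a complete bypass of the Riccati machinery: once you observe the congruence $U_T(\Sigma)=P_T\Sigma P_T^{\rT}$, the check is pure trace bookkeeping (in effect you are verifying that your explicit $\Xi_T$ solves the Riccati ODE, since $\Tr(\Sigma\,\d_T\Xi_T)=\Tr((A\Sigma+\Sigma A^{\rT})\Xi_T)-\Tr(D\Xi_T\Sigma\Xi_T)$ is exactly that statement tested against all $\Sigma$). What the paper's approach buys is an explanation of where (\ref{Shat}) comes from and a whole one-parameter family of solutions of the same HJE, of which (\ref{U}) is the distinguished limiting member; that extra structure is what motivates the subsequent correction scheme. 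One small presentational gap: you should note explicitly that the equality of $\d_T\wh{S}_T$ and $F(\Sigma,\d_{\Sigma}\wh{S}_T)$ must hold for \emph{all} $\Sigma\succ 0$, so the constant, linear and quadratic (in $\Sigma$) trace terms must each match --- your final paragraph does this matching but only sketches the sign check on the quartic-in-$\Gamma_T^{-1}$ term, which is the one place where an error could hide.
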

%==============================================================================
\begin{proof} By differentiating the ansatz (\ref{classShat}), it follows that
\begin{equation}
\label{Shatdiff}
    \d_T\wh{S}_T
     =
    \Tr
    (
        \Sigma
        \d_T\Xi_T
    )
        +
        \d_T\Ups_T,
        \qquad
    \d_{\Sigma}\wh{S}_T
    =
    \Sigma^{-1} + \Xi_T.
\end{equation}
In view of (\ref{MCdot}), substitution of $\d_{\Sigma}\wh{S}_T$ into
(\ref{HJ_S}) yields
\begin{align}
\nonumber
    F
    (
        \Sigma,
        \d_T\wh{S}_T
    )
\nonumber
     &=
    \Tr
    (
        (
            A\Sigma + \Sigma A^{\rT}
            + D
            -
            D
            (
                \Sigma^{-1}
                +
                \Xi_T
            )
            \Sigma
        )
        \d_{\Sigma}\wh{S}_T
    )\\
\nonumber
     &=
    \Tr
    (
        (
            A\Sigma + \Sigma A^{\rT}
            -
            D
                \Xi_T
            \Sigma
        )
        (
            \Sigma^{-1}
            +
            \Xi_T
        )
    )\\
\label{Shatright}
     &=
    \Tr
    (
        (
            \Xi_T A + A^{\rT} \Xi_T - \Xi_T D\Xi_T
        )
        \Sigma
    )
     +
    \Tr
    (
        2A - D\Xi_T
    ).
\end{align}
By equating  $\d_T\wh{S}_T$  from (\ref{Shatdiff})
with the right-hand side of (\ref{Shatright}), it follows that the
ansatz function $\wh{S}_T$ in (\ref{classShat}) is a solution of
(\ref{HJ_S}) if and only if $\Xi_T$ and $\Ups_T$ satisfy the ODEs
\begin{align}
\label{Xi_diff}
    \d_T\Xi_T
    & =
    \Xi_T A + A^{\rT} \Xi_T - \Xi_T D \Xi_T,\\
\label{Ups_diff}
    \d_T\Ups_T
    & =
    \Tr (2A-D\Xi_T).
\end{align}
Multiplication of the right-hand side of (\ref{Xi_diff})
by $\Xi_T^{-1}$ yields a matrix whose trace coincides with the
right-hand side of (\ref{Ups_diff}). Hence,
$    \d_T\ln |\det \Xi_T|
    =
    \Tr
    (
        \Xi_T^{-1}
        \d_T\Xi_T
    )
    =
    \d_T\Ups_T
$, and
\begin{equation}
\label{Ups}
    \Ups_T
    =
    \ln|\det \Xi_T|
    +
    \Ups_0
    -
    \ln|\det \Xi_0|.
\end{equation}
Left and right multiplication of both parts of  (\ref{Xi_diff}) by $\Xi_T^{-1}$ yields
a differential Lyapunov equation
\begin{equation}
\label{Xidiff}
    \d_T(\Xi_T^{-1})
    =
    -
    \Xi_T^{-1}
    (\d_T\Xi_T)
    \Xi_T^{-1}
    =
    D-A\Xi_T^{-1}
    -
    \Xi_T^{-1}A^{\rT}
\end{equation}
for  $\Xi_T^{-1}$ with a unique equilibrium point
$\Xi_T = -\Pi_*^{-1}$; cf. (\ref{Pi*}).
The general solution of (\ref{Xidiff}) is expressed via
 (\ref{Gamma}) as
\begin{equation}
\label{Xi}
    \Xi_T
    =
    \re^{A^{\rT}T}
    (
        \Xi_0^{-1} + \Gamma_T
    )^{-1}
    \re^{AT}.
\end{equation}
Setting $\Ups_0 := \ln|\det \Xi_0|$ in (\ref{Ups}) and
$\Xi_0 \to \infty$ in (\ref{Xi}) gives
\begin{equation}
\label{hatXiUps}
\Xi_T
     =
    \re^{A^{\rT} T}
    \Gamma_T^{-1}
    \re^{AT},
    \qquad
\Ups_T
     =
%    \ln \det \Xi_T
%    =
    2 T \Tr A
    -
    \ln\det \Gamma_T.
\end{equation}
Substitution of (\ref{hatXiUps}) into (\ref{classShat}) yields the
particular solution of (\ref{HJ_S}) described by
(\ref{Shat})--(\ref{U}).
\end{proof}
%==============================================================================

%%%%%%%%%%%%%%%%%%%%%%%%%%%%%%%%%%%%%%%%%%%%%%%%%%%%%%%%%%%%%%%%%%%%%%%%%%%%%%%
\section{Trace-analytic correction\label{sec:trace_analytic_correction}}
%%%%%%%%%%%%%%%%%%%%%%%%%%%%%%%%%%%%%%%%%%%%%%%%%%%%%%%%%%%%%%%%%%%%%%%%%%%%%%%

Since $\wh{S}_T\to -\infty$ as $\Sigma\to 0$, regardless of the choice of
 $\Xi_T$ and $\Ups_T$, the class   (\ref{classShat})  can not provide a nonnegative solution to the HJE  (\ref{HJ_S}). We will therefore correct
$\wh{S}_T$ from (\ref{Shat}) by adding a function $\wt{S}_T :=
\wt{S}_T(\Sigma,\Theta)$ such that (\ref{SSS}) is a nonnegative
solution of (\ref{HJ_S}) satisfying the boundary condition
(\ref{S0}). Substitution of $\d_{\Sigma}\wh{S}_T$ from
(\ref{Shatdiff}) into the correction scheme
(\ref{Finter1})--(\ref{Finter3}) yields
\begin{equation}
\label{HJ_Stilde}
    \d_T\wt{S}_T
     =
    \Tr
    (
        (\,
            \overbrace{
                (A-D\Xi_T)\Sigma + \Sigma (A-D\Xi_T)^{\rT} -
                D
            }^{\wh{\cC}(\Sigma)}
            -
            D
            (\d_{\Sigma} \wt{S}_T)
            \Sigma
        )
        \d_{\Sigma} \wt{S}_T
    ).
\end{equation}
To find the correcting function $\wt{S}_T$,
 as a solution to
(\ref{HJ_Stilde}) such that $S_T$ in (\ref{SSS}) is nonnegative and
satisfies (\ref{S0}),
we will employ yet
another ansatz:
\begin{equation}
\label{classStilde}
    \wt{S}_T
    :=
    \Tr \chi(\Omega_T)
    +
    \rho_T,
    \qquad
    \Omega_T
    :=
    \Psi_T
    \Sigma
    \Psi_T.
\end{equation}
Here, $\chi$ is a nonconstant function of a complex variable,
analytic in a neighbourhood of $\mR_+$ and real-valued on $\mR_+$.
Also, $\Psi_T(\Theta)$ and $\rho_T(\Theta)$ are smooth functions of
the time horizon $T$ and the terminal state covariance matrix
$\Theta$ with values in $\mP_n$ and $\mR$, respectively.
The matrix $\Psi_T$, which  specifies the linear operator $\Sigma \mapsto \Omega_T$ in (\ref{classStilde}), enters the ``trace-analytic'' function $\Tr\chi(\Omega_T)$ only
through $\Psi_T^2$, since
\begin{equation}
\label{chi_Psi}
    \Tr \chi(\Omega_T)
    =
    \Tr \chi(\Psi_T^{-1} \Psi_T^2\Sigma  \Psi_T )
    =
    \Tr\chi(\Psi_T^2 \Sigma),
\end{equation}
where we use the invariance of the trace under
similarity transformations and their commutativity with analytic
functions of matrices.  To find suitable $\chi$, $\Psi_T$,
$\rho_T$, we compute the derivatives
$
    \dot{\wt{S}}
     =
    \Tr
    (
        (
            \dot{\Psi}\Psi^{-1}
            +
            \Psi^{-1} \dot{\Psi}
        )
        \chi\,'(\Omega) \Omega
    )
    +
    \dot{\rho}
 $ and $
    \d_{\Sigma}\wt{S}
     =
    \Psi\chi\,'(\Omega)\Psi
$
by using Lemma~\ref{lem:diff} and its corollary
(\ref{phidiffdiff})--(\ref{chidiffdiff})  in Appendix~A, with
the subscript $T$ omitted for brevity, and $\dot{(\,)} := \d_T$. Substituting the
 derivatives into (\ref{HJ_Stilde}) yields
\begin{align}
\nonumber
    \Tr
    (
        (
            \dot{\Psi}\Psi^{-1}
            +
            \Psi^{-1} \dot{\Psi}
        )
        \chi\,'(\Omega) \Omega
    )
    +
    \dot{\rho}
    =&
    \Tr
    (
        (
            \wh{\cC}(\Sigma)
            -D
            \Psi \chi\,'(\Omega) \Psi
            \Sigma
        )
        \Psi \chi\,'(\Omega) \Psi
    )\\
\nonumber
    =&
    \Tr
    (
        (
            \Psi
            (A-D\Xi)
            \Psi^{-1}
            +
            \Psi^{-1}
            (A-D\Xi)^{\rT}
            \Psi
        )
        \chi\,'(\Omega)\Omega
    )\\
\label{HJchi}
     & -
    \Tr
    (
        \Psi D \Psi
        \chi\,'(\Omega)
        (
            I_n + \chi\,'(\Omega)\Omega
        )
    ),
\end{align}
where use is made of $\Omega = \Psi \Sigma \Psi$ from
(\ref{classStilde}) and the commutativity of
$\chi\,'(\Omega)$ and $\Omega$. Regrouping the terms of (\ref{HJchi}) yields
\begin{equation}
\label{GOmega}
    \sum_{k=1}^{2}
    \Tr(G_k\chi_k(\Omega_T))
    +
    \d_T\rho_T = 0
\end{equation}
for all $T>0$ and $\Sigma, \Theta\succ 0$. Here,
\begin{align}
\label{G1}
    G_1(T,\Theta)
      &:= 
    \Psi_T D\Psi_T,\\
\label{G2}
    G_2(T,\Theta)
     &:=
            (\d_T\Psi_T)\Psi_T^{-1}
            +
            \Psi_T^{-1} \d_T\Psi_T
              -
            \Psi_T
            (A-D\Xi_T)
            \Psi_T^{-1}
            -
            \Psi_T^{-1}
            (A-D\Xi_T)^{\rT}
            \Psi_T
\end{align}
are $\mS_n$-valued functions, with $G_1(T,\Theta)\succ
0$. Also,
\begin{equation}
\label{chi12}
    \chi_1(\omega)
    :=
    \chi\,'(\omega)
    (1+\omega\chi\,'(\omega)),
    \qquad
    \chi_2(\omega)
    :=
    \omega
    \chi\,'(\omega)
\end{equation}
are functions of a complex variable $\omega$, which inherit from
$\chi$ the analyticity in a neighbourhood of and
real-valuedness on $\mR_+$. Since
$\chi_1(\omega)=\chi_2(\omega)(1+\chi_2(\omega))/\omega$, then  $\chi_1$ or $\chi_2$ is not constant.
For any given $(T,\Theta)$, the map $\Sigma\mapsto \Omega_T$
in (\ref{classStilde}) is a bijection of $\mP_n$, and hence, (\ref{GOmega}) is equivalent to
\begin{equation}
\label{GOmega1}
    \sum_{k=1}^{2}
    \Tr(G_k(T,\Theta)\chi_k(\Omega))
    +
    \d_T\rho_T = 0
\end{equation}
for all
    $T>0$ and $\Theta, \Omega\succ 0$.
Application of the separation-of-variables principle of
Lemma~\ref{lem:matrix_separation} from Appendix~B to
(\ref{GOmega1}) yields the existence of constants
$\lambda, \tau\in \mR$ such that the function $\chi$, which generates
$\chi_1$, $\chi_2$  in (\ref{chi12}), satisfies the ODE
\begin{equation}
\label{chi_ODE}
    \chi\,'(\omega)
    (1+\omega\chi\,'(\omega))
    +
    \lambda \omega\chi\,'(\omega) + \tau = 0,
\end{equation}
and $G_1$ and $G_2$ in (\ref{G1}) and (\ref{G2}) and
$\rho_T$ from (\ref{classStilde}) satisfy
\begin{align}
\label{Psi_PDE}
            (\d_T\Psi_T)\Psi_T^{-1}
              +
            \Psi_T^{-1} \d_T\Psi_T
               -
            \Psi_T
            (A-D\Xi_T)
            \Psi_T^{-1}
            -
            \Psi_T^{-1}
            (A-D\Xi_T)^{\rT}
            \Psi_T
             & =
            \lambda
            \Psi_T D \Psi_T,\\
\label{rho_PDE}
        \d_T\rho_T
         & =
        \tau \Tr(D \Psi_T^2).
\end{align}
The PDEs (\ref{Psi_PDE}) and (\ref{rho_PDE}) are solved in the
lemma below and the result is then combined with the ODE
(\ref{chi_ODE}).

%==============================================================================
\begin{lemma}
\label{lem:Stilde} The following function is a solution of the
modified HJE  (\ref{HJ_Stilde}) in the class (\ref{classStilde}):
\begin{equation}
\label{Stilde_solution}
    \wt{S}_T(\Sigma, \Theta)
    =
    \Tr
    \chi(
        U_T(\Sigma)
        V_T(\Theta)
    )
    +
    \rho_T(\Theta),
\end{equation}
where $U_T$ is given by (\ref{U}) and $\chi$ satisfies
the ODE (\ref{chi_ODE}). Here, for any $\lambda \> 0$, the map $V_T:
\mP_n\to \mP_n$ is associated with  (\ref{Gamma}) by
\begin{equation}
\label{V_mho}
    V_T(\Theta)
    :=
    \Gamma_T^{-1/2}
    \mho_T(\Theta)^{-1}
    \Gamma_T^{-1/2},
    \
    \mho_T(\Theta)
    :=
    \lambda \Gamma_T^{-1} + \mho(\Theta),
 \end{equation}
where $\mho(\Theta)$ is a $\mS_n$-valued function of $\Theta$ only which satisfies
\begin{equation}
\label{mho_bottom}
    \mho(\Theta)
    \succcurlyeq
    -\lambda
    \Pi_*^{-1}\ {\rm for}\ \lambda > 0,
    \qquad
    \mho(\Theta)
    \succ 0\
    {\rm for}\ \lambda = 0,
\end{equation}
with $\Pi_*$  given by
(\ref{alphaPi*}). Also,
\begin{equation}
\label{rho_solution}
    \rho_T(\Theta)
    =
    \rho(\Theta)
    -
    \left\{
    \begin{array}{cl}
    (\tau/\lambda)
    \ln
    \det
    \mho_T(\Theta) & {\rm for}\ \lambda >0\\
    \tau\Tr(\mho(\Theta)^{-1}\Gamma_T^{-1}) & {\rm for}\ \lambda =0
    \end{array}
    \right.,
\end{equation}
where $\rho(\Theta)$ is a $\mR$-valued function of $\Theta$ only.
\end{lemma}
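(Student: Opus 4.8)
The plan is to leverage the reduction already established just before the lemma: the ansatz (\ref{classStilde}) solves the modified HJE (\ref{HJ_Stilde}) exactly when $\chi$ satisfies the ODE (\ref{chi_ODE}), the matrix $\Psi_T$ satisfies the PDE (\ref{Psi_PDE}), and $\rho_T$ satisfies (\ref{rho_PDE}). Since $\chi$ is prescribed as a solution of (\ref{chi_ODE}), it remains only to solve (\ref{Psi_PDE}) and (\ref{rho_PDE}) in closed form and to recast the resulting $\wt{S}_T$ as (\ref{Stilde_solution}). First I would linearize (\ref{Psi_PDE}). Setting $N_T := A - D\Xi_T$ and multiplying (\ref{Psi_PDE}) on both the left and the right by $\Psi_T$ collapses the symmetric sandwich $(\d_T\Psi_T)\Psi_T^{-1} + \Psi_T^{-1}\d_T\Psi_T$ into $\d_T(\Psi_T^2)$ and produces the matrix Riccati equation $\d_T P_T = P_T N_T + N_T^{\rT} P_T + \lambda P_T D P_T$ for $P_T := \Psi_T^2$. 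Passing to the inverse $Q_T := \Psi_T^{-2}$ through $\d_T Q_T = -Q_T(\d_T P_T)Q_T$ annihilates the quadratic term and yields the \emph{linear} differential Lyapunov equation $\d_T Q_T = -N_T Q_T - Q_T N_T^{\rT} - \lambda D$. This double-sided multiplication is precisely what defeats the quadraticity and noncommutativity of (\ref{Psi_PDE}).

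Next I would solve this Lyapunov equation explicitly. Using $\d_T\Gamma_T = \re^{AT} D \re^{A^{\rT}T}$ from (\ref{Gamma}) and $\Xi_T = \re^{A^{\rT}T}\Gamma_T^{-1}\re^{AT}$ from (\ref{hatXiUps}), a direct differentiation confirms that $Q_T = \re^{-AT}\Gamma_T\,\mho_T\,\Gamma_T\,\re^{-A^{\rT}T}$ satisfies $\d_T Q_T = -N_T Q_T - Q_T N_T^{\rT} - \lambda D$, where $\mho_T = \lambda\Gamma_T^{-1} + \mho(\Theta)$ as in (\ref{V_mho}) and $\mho(\Theta)$ is an arbitrary symmetric constant of integration depending only on the terminal data $\Theta$. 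Inverting gives $\Psi_T^2 = \re^{A^{\rT}T}\Gamma_T^{-1}\mho_T^{-1}\Gamma_T^{-1}\re^{AT}$. To obtain (\ref{Stilde_solution}) I would then invoke the similarity and cyclic invariance of $\Tr\chi$ already used in (\ref{chi_Psi}), together with the identity $U_T(\Sigma) = \Gamma_T^{-1/2}\re^{AT}\Sigma\re^{A^{\rT}T}\Gamma_T^{-1/2}$ that follows from (\ref{U}); this shows $\Tr\chi(\Psi_T^2\Sigma) = \Tr\chi(U_T(\Sigma)V_T(\Theta))$ with $V_T$ exactly as in (\ref{V_mho}).

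Finally I would integrate (\ref{rho_PDE}). Writing $\Tr(D\Psi_T^2) = \Tr(\d_T\Gamma_T\,\Gamma_T^{-1}\mho_T^{-1}\Gamma_T^{-1})$ and, for $\lambda>0$, using $\d_T\mho_T = -\lambda\Gamma_T^{-1}(\d_T\Gamma_T)\Gamma_T^{-1}$, the integrand becomes a total $T$-derivative, $\Tr(D\Psi_T^2) = -(1/\lambda)\,\d_T\ln\det\mho_T$; for $\lambda=0$ the matrix $\mho_T\equiv\mho$ is constant and $\Tr(D\Psi_T^2) = -\d_T\Tr(\mho^{-1}\Gamma_T^{-1})$. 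Integration in $T$ then produces the two-case expression (\ref{rho_solution}), with $\rho(\Theta)$ the remaining free function. The last point is to record the admissibility constraints (\ref{mho_bottom}): since (\ref{Gamma}) gives $\Pi_* - \Gamma_T = \re^{AT}\Pi_*\re^{A^{\rT}T}\succ 0$ and $\Gamma_T^{-1}\to\Pi_*^{-1}$ as $T\to+\infty$, positivity of $\mho_T = \lambda\Gamma_T^{-1}+\mho(\Theta)$ for all $T>0$ (needed so that $V_T,\Psi_T^2\succ0$ and the trace-analytic ansatz is well defined on $\mP_n$) forces $\mho(\Theta)\succcurlyeq -\lambda\Pi_*^{-1}$ when $\lambda>0$ and $\mho(\Theta)\succ0$ when $\lambda=0$.

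The main obstacle I anticipate is the linearization step itself: recognizing that multiplying (\ref{Psi_PDE}) on both sides by $\Psi_T$ and then inverting reduces a quadratic, noncommuting matrix PDE to a solvable linear Lyapunov equation, and then verifying that the candidate $Q_T$ genuinely solves it, which, while elementary, demands careful bookkeeping of the $\re^{\pm AT}$ and $\Gamma_T$ factors and the symmetry of $\Xi_T$. A secondary subtlety is correctly matching the closed-form antiderivative for $\rho_T$ across the two regimes $\lambda>0$ and $\lambda=0$.
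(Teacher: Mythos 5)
Your proposal is correct and follows essentially the same route as the paper: both reduce the quadratic, noncommuting PDE (\ref{Psi_PDE}) to the linear differential Lyapunov equation for $\Psi_T^{-2}$ (you via an intermediate Riccati equation for $\Psi_T^2$ followed by inversion, the paper by multiplying directly by $\Psi_T^{-1}$ on both sides), arrive at the same closed form $\Psi_T^{-2}=\re^{-AT}\Gamma_T\mho_T\Gamma_T\re^{-A^{\rT}T}$ (you by direct verification, the paper by variation of constants), and then identify $\Tr\chi(\Psi_T^2\Sigma)$ with $\Tr\chi(U_T(\Sigma)V_T(\Theta))$, derive (\ref{mho_bottom}) from the monotone convergence $\Gamma_T^{-1}\downarrow\Pi_*^{-1}$, and integrate (\ref{rho_PDE}) case-by-case exactly as in (\ref{rho_PDE1}). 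The differences are purely presentational.
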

%==============================================================================
\begin{proof} Since $
    \Psi^{-1}
        \dot{\Psi}
    \Psi^{-2}$
    + $
    \Psi^{-2}
        \dot{\Psi}
    \Psi^{-1}$
    = $
    \Psi^{-2}
    (\Psi^2)^{\centerdot}
    \Psi^{-2}
    $ = $
    -
    (\Psi^{-2})^{\centerdot}
$,  left and right multiplication of both sides of (\ref{Psi_PDE})
by $\Psi_T^{-1}$ yields a differential Lyapunov equation
\begin{equation}
\label{Psi_PDE1}
            \d_T(\Psi_T^{-2})
            +
            (A-D\Xi_T)
            \Psi_T^{-2}
            +
            \Psi_T^{-2}
            (A-D\Xi_T)^{\rT}
            +
            \lambda
            D
            =0
\end{equation}
with respect to $\Psi_T^{-2}$.
Its solution is expressed in terms of the fundamental
solution of the ODE
\begin{equation}
\label{aux_ODE}
     \d_T\xi_T
     =
    -(A-D\Xi_T)\xi_T
    =
        -(A-D    \re^{A^{\rT} T}
    \Gamma_T^{-1}
    \re^{AT}
    )
    \xi_T,
\end{equation}
where $\Xi_T$ is given by (\ref{hatXiUps}). The relation $
    \dot{\Gamma}_t
    =
    \re^{At}D\re^{A^{\rT}t}
$ implies that the general solution of (\ref{aux_ODE}) is $
    \xi_T = \re^{-AT} \Gamma_T \zeta
$,  where $\zeta\in \mR^n$ is an arbitrary constant vector. Hence,
the solution of (\ref{Psi_PDE1}) can be found in the form
\begin{equation}\label{Psi_form}
    \Psi_T^{-2}
    =
    \re^{-AT}
    \Gamma_T
    \mho_T
    \Gamma_T
    \re^{-A^{\rT}T}
\end{equation}
by the variation of constants method, where $\mho_T(\Theta)$ is a
$\mS_n$-valued function of $T,\Theta$. Substitution of
(\ref{Psi_form}) into (\ref{Psi_PDE1}) yields
\begin{equation}
\label{mho_PDE}
    \d_T \mho_T
    =
    -\lambda
    \Gamma_T^{-1}
    \re^{AT}
    D
    \re^{A^{\rT}T}
    \Gamma_T^{-1}
    =
    \lambda
    \d_T (\Gamma_T^{-1}),
\end{equation}
whence $\mho_T$ in (\ref{V_mho}) is obtained by integration. Now,
the right-hand side of (\ref{Psi_form}) is positive definite for all
$T>0$ if and only if so is $\mho_T(\Theta)$. In view of
(\ref{V_mho}), the condition $\mho_T(\Theta)\succ 0$ for any $T>0$ is
equivalent to $\lambda\>0$ and (\ref{mho_bottom}).
Indeed, 1) the controllability Gramian in
(\ref{Gamma}) satisfies $0\prec \Gamma_T\prec \Pi_*$ for all $T>0$,
and strictly $\prec$-monotonically  approaches
 $0$ and $\Pi_*$   as $T$ tends to $0$ and
$+\infty$, respectively; 2) $(\cdot)^{-1}$ is a
decreasing operator on the set $\mP_n$ with respect to the partial
ordering $\prec$. Therefore,
\begin{equation}
\label{Psi_solution}
    \Psi_T(\Theta)^2
     =
    \re^{A^{\rT}T}
    \Gamma_T^{-1}
    \mho_T(\Theta)^{-1}
    \Gamma_T^{-1}
    \re^{AT},
\end{equation}
which is obtained as the matrix
inverse of (\ref{Psi_form}). Substitution of (\ref{Psi_solution}) into (\ref{chi_Psi})
yields
\begin{align}
\nonumber
    \Tr
    \chi(\Omega_T)
    & =
    \Tr
    \chi
    (
        \Sigma
        \re^{A^{\rT}T}
        \Gamma_T^{-1}
        \mho_T(\Theta)^{-1}
        \Gamma_T^{-1}
        \re^{AT}
    )\\
\label{chi_UV}
    & =
    \Tr
    \chi
    (
        \Gamma_T^{-1/2}
        \re^{AT}
        \Sigma
        \re^{A^{\rT}T}
        \Gamma_T^{-1/2}
        \Gamma_T^{-1/2}
        \mho_T(\Theta)^{-1}
        \Gamma_T^{-1/2}
    )
     =
    \Tr
    \chi
    (
        U_T(\Sigma)
        V_T(\Theta)
    ),
\end{align}
where we have used (\ref{U}) and $V_T$ from (\ref{V_mho}). By combining
(\ref{Psi_solution}) with (\ref{rho_PDE}) and (\ref{mho_PDE}), it follows that
\begin{align}
\nonumber
        \d_T\rho_T
         & =
        \tau
        \Tr
        (
            D
            \re^{A^{\rT}T}
            \Gamma_T^{-1}
            \mho_T(\Theta)^{-1}
            \Gamma_T^{-1}
            \re^{AT}
        )\\
\label{rho_PDE1}
        & =
        \tau
        \Tr
        (
            \mho_T(\Theta)^{-1}
            \Gamma_T^{-1}
            \re^{AT}
            D
            \re^{A^{\rT}T}
            \Gamma_T^{-1}
        )
    =
        -
        \left\{
        \begin{array}{cl}
            (\tau/\lambda)\d_T\ln\det\mho_T(\Theta)
            & {\rm for}\ \lambda > 0\\
            \tau\d_T\Tr(\mho(\Theta)^{-1}\Gamma_T^{-1})
            & {\rm for}\ \lambda = 0
        \end{array}
        \right..
\end{align}
Here, use is made of  the property that, in the case $\lambda =
0$, the matrix  $\mho_T(\Theta)$ from (\ref{V_mho}) does not depend
on $T$. Integration of the
PDE (\ref{rho_PDE1}) yields (\ref{rho_solution}).
Assembling the latter and
(\ref{chi_UV}) into (\ref{classStilde}) leads to (\ref{Stilde_solution}).
\end{proof}
%==============================================================================

For any constant $s>0$, the function $\wt{S}_T$, described by
Lemma~\ref{lem:Stilde}, is invariant under the scaling
transformation
\begin{equation}
\label{scaling}
    \chi(\omega)\mapsto \chi(s\omega),
    \qquad
    \mho\mapsto s \mho,
    \qquad
    \lambda\mapsto s \lambda,
    \qquad
    \tau\mapsto s\tau,
\end{equation}
and so are the ratio $\lambda/\tau$ and the ODE (\ref{chi_ODE}). We
now combine Lemmas~\ref{lem:Shat} and \ref{lem:Stilde} to
finalise the correction scheme (\ref{SSS})--(\ref{Finter3}).

%==============================================================================
\begin{theorem}
\label{th:SSS} A nonnegative solution of the covariance HJE  (\ref{HJ_S}),
satisfying the boundary condition (\ref{S0}), is given by
\begin{equation}
\label{SS}
    S_T(\Sigma,\Theta)
    :=
    \ln\det (2U_T(\Sigma))
    +
    \Tr
    (
        U_T(\Sigma) + V_T(\Theta)
        +
        \chi(
            U_T(\Sigma)
            V_T(\Theta)
        )
    ).
\end{equation}
Here,
\begin{equation}
\label{bingo}
    \chi(\omega)
    =
    -\sqrt{1+4\omega}
    -
    \ln
    (
        \sqrt{1+4\omega} - 1
    ),
\end{equation}
the map $U_T$ is given by (\ref{U}), and $V_T$ in
(\ref{V_mho}) takes the form
\begin{equation}
\label{VV}
    V_T(\Theta)
    =
    \Gamma_T^{-1/2} \Theta \Gamma_T^{-1/2}.
\end{equation}
\end{theorem}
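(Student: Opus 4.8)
The plan is to build the solution from the two ingredients already prepared, namely the starting solution $\wh{S}_T$ of Lemma~\ref{lem:Shat} and the correcting function $\wt{S}_T$ of Lemma~\ref{lem:Stilde}, glued together by the decomposition (\ref{SSS}); the whole task is to fix the free data $\lambda,\tau,\chi,\mho,\rho$ so that the boundary condition (\ref{S0}) and nonnegativity both hold. First I would take $\lambda=0$, for which the ODE (\ref{chi_ODE}) reduces to the quadratic $\omega\chi'(\omega)^2+\chi'(\omega)+\tau=0$; normalising $\tau=-1$ (permissible by the scaling invariance (\ref{scaling})) and selecting the branch $\chi'(\omega)=-(1+\sqrt{1+4\omega})/(2\omega)$, a one-line integration with vanishing constant gives precisely (\ref{bingo}). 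Choosing $\mho(\Theta):=\Theta^{-1}$ in (\ref{V_mho})---admissible because $\Theta\succ 0$ satisfies the $\lambda=0$ requirement $\mho\succ 0$ of (\ref{mho_bottom})---collapses $V_T$ to the form (\ref{VV}). With these choices (\ref{rho_solution}) gives $\wt{S}_T=\Tr\chi(U_TV_T)+\rho(\Theta)+\Tr V_T$, and adding $\wh{S}_T=\ln\det U_T+\Tr U_T$ produces $S_T=\ln\det U_T+\Tr(U_T+V_T+\chi(U_TV_T))+\rho(\Theta)$; that this solves the covariance HJE (\ref{HJ_S}) is then automatic from the correction scheme together with Lemmas~\ref{lem:Shat} and~\ref{lem:Stilde}.

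Next I would fix the still-free scalar $\rho(\Theta)$ from the boundary condition (\ref{S0}). The crucial observation is that at $\Theta=C_T(\Sigma)$ one has $V_T(C_T(\Sigma))=\Gamma_T^{-1/2}C_T(\Sigma)\Gamma_T^{-1/2}=U_T(\Sigma)+I_n$ by (\ref{U}) and (\ref{VV}), so $U_T$ and $V_T$ commute and everything diagonalises simultaneously. For an eigenvalue $u>0$ of $U_T$ the matching eigenvalues of $V_T$ and $U_TV_T$ are $u+1$ and $u^2+u$, and since $1+4(u^2+u)=(2u+1)^2$ the scalar value (\ref{bingo}) is $\chi(u^2+u)=-(2u+1)-\ln(2u)$; the per-eigenvalue contribution $\ln u+u+(u+1)+\chi(u^2+u)$ collapses to $-\ln 2$. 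Summing over the $n$ eigenvalues forces $\rho(\Theta)=n\ln 2$ (a constant), and absorbing $n\ln 2=\ln 2^n$ into the determinant turns $\ln\det U_T$ into $\ln\det(2U_T)$, yielding exactly (\ref{SS}).

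The hard part will be nonnegativity, where the noncommutativity of $U:=U_T(\Sigma)$ and $V:=V_T(\Theta)$ obstructs any naive eigenvalue argument. My plan is to freeze $U\succ 0$ and minimise over $V$, which ranges over all of $\mP_n$ as $\Theta$ does. The noncommutativity is defused by the fact that $UV$ is similar to $U^{1/2}VU^{1/2}$, so $\Tr\chi(UV)=\Tr\chi(U^{1/2}VU^{1/2})$; since $\chi''(\omega)=4/(\sqrt{1+4\omega}\,(\sqrt{1+4\omega}-1)^2)>0$ on $\mR_+$, the map $V\mapsto\Tr\chi(U^{1/2}VU^{1/2})$ is convex (the composition of the convex spectral function $W\mapsto\Tr\chi(W)$ with the linear map $V\mapsto U^{1/2}VU^{1/2}$), whence $S_T$ is convex in $V$. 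The stationarity condition $\d_V S_T=I_n+\chi'(UV)U=0$, i.e.\ $\chi'(UV)=-U^{-1}$, is met at $V=U+I_n$, because there $UV=U^2+U$ and the scalar identity $\chi'(u^2+u)=-1/u$ lifts through simultaneous diagonalisation to $\chi'(U^2+U)=-U^{-1}$. By convexity this interior stationary point is the global minimiser of $S_T(\Sigma,\cdot)$, and its value is the $0$ already computed in the boundary step; hence $S_T\ge 0$ for all $\Sigma,\Theta\succ 0$, with equality exactly on the nominally reachable set $\Theta=C_T(\Sigma)$.
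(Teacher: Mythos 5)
Your construction of $S_T$ coincides with the paper's: the same specialisation $\lambda=0$, $\tau=-1$ of the ODE (\ref{chi_ODE}), the same branch of $\chi'$, the same choices $\mho(\Theta)=\Theta^{-1}$ and $\rho(\Theta)=n\ln 2$, and the same verification of (\ref{S0}) — you do it eigenvalue by eigenvalue using $V_T(C_T(\Sigma))=U_T(\Sigma)+I_n$ and $1+4(u^2+u)=(2u+1)^2$, while the paper packages the identical computation into the matrix identity (\ref{chi_identity}); these are interchangeable. Where you genuinely go beyond the paper is the nonnegativity claim: the paper's proof establishes only that $S_T$ solves (\ref{HJ_S}) and satisfies (\ref{S0}), and leaves the asserted nonnegativity without an explicit argument. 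Your argument for it is sound: $\chi''>0$ on $(0,+\infty)$, the spectral function $W\mapsto\Tr\chi(W)$ is convex on $\mP_n$ for convex $\chi$, $\Tr\chi(UV)=\Tr\chi(U^{1/2}VU^{1/2})$ defuses the noncommutativity, the resulting convex function of $V$ has an interior stationary point at $V=U+I_n$ (since $\chi'(u^2+u)=-1/u$ lifts to $\chi'(U^2+U)=-U^{-1}$), and the value there is the boundary value $0$; since $V$ ranges over all of $\mP_n$ as $\Theta$ does, $S_T\geqslant 0$ follows. This is a worthwhile supplement rather than a deviation: it supplies the one part of the theorem's statement that the paper's own proof does not address.
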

%==============================================================================
\begin{proof} By solving (\ref{chi_ODE}) as a quadratic
equation with respect to  $\chi\,'(\omega)$, it follows that
\begin{equation}
\label{chi_dash}
    \chi\,'(\omega)
    =
    (
        -(\lambda \omega + 1)
        \pm
        \sqrt{(\lambda \omega + 1)^2 - 4\tau\omega}
    )
    /
    (2\omega).
\end{equation}
The function $\chi$  can now be obtained by integrating one of the two
regular branches of (\ref{chi_dash}) in $\omega$. This integration is straightforward in
the case $\lambda := 0$ which is shown below to yield a
feasible solution in (\ref{SS}). In this case, $\tau$ must be negative to make $\chi(\omega)$ real for all $\omega \> 0$ (the
trivial situation $\lambda = \tau = 0$ is excluded from
consideration). In view of
(\ref{scaling}), we set $\tau := -1$ without loss of generality.
Integration of
$
    \chi\,'(\omega)
    =
    -
    (1 + \sqrt{1+4\omega})/(2\omega)
$
yields (\ref{bingo}).  This particular choice of the branch is
motivated by the identity
\begin{equation}
\label{chi_identity}
    \chi((v-1)v) = 1-2v-\ln(2 (v-1))
\end{equation}
for $\chi$ from
(\ref{bingo}),  which is crucial in what follows to achieve the
fulfillment of (\ref{S0}). To this end, by
assembling the starting solution $\wh{S}_T$ from
Lemma~\ref{lem:Shat} and the trace-analytic correcting function
$\wt{S}_T$ from Lemma~\ref{lem:Stilde} (with $
    \lambda := 0 $,
$
    \tau := -1
$)
 into
(\ref{SSS}), it follows that
\begin{equation}
\label{Strial}
    S_T(\Sigma,\Theta)
     =
    \ln\det U_T(\Sigma) + \Tr U_T(\Sigma)
    +
    \Tr
    \chi(
        U_T(\Sigma)
        V_T(\Theta)
    )
    +
    \rho(\Theta) + \Tr V_T(\Theta),
\end{equation}
where the map $V_T$ is given by (\ref{V_mho}) with
$\lambda = 0$, so that
\begin{equation}\label{VVV}
    V_T(\Theta)
    =
    \Gamma_T^{-1/2}\mho(\Theta)^{-1}\Gamma_T^{-1/2}.
\end{equation}
 From (\ref{U}), it follows that if the terminal state covariance matrix $\Theta$ is nominally
reachable from the initial state covariance matrix $\Sigma$ in time
$T$, then $
    U_T(\Sigma)
    =
    \Gamma_T^{-1/2} \Theta \Gamma_T^{-1/2} - I_n
$ for $\Theta = C_T(\Sigma)$. Hence, $S_T$ in (\ref{Strial}) satisfies
(\ref{S0}) if and only if
\begin{align}
\nonumber
    S_T(C_T^{-1}(\Theta),\Theta)
    = &
    \ln\det
    (\Theta \Gamma_T^{-1}-I_n)
    +
    \Tr (\Theta \Gamma_T^{-1}-I_n)\\
\label{Strial0}
    & +
    \Tr
    \chi(
        (\Gamma_T^{-1/2} \Theta \Gamma_T^{-1/2}-I_n)
        V_T(\Theta)
    )
    +
    \rho(\Theta) + \Tr V_T(\Theta)
    =0
\end{align}
for all $
    \Theta \succ \Gamma_T$.
With $\chi$ given by (\ref{bingo}), it is
possible to find $\mho$, $\rho$ so as to satisfy
(\ref{Strial0}). Indeed, by setting $
    \mho(\Theta):= \Theta^{-1}$ and $
    \rho(\Theta) := n\ln 2
$,  the representation (\ref{VVV}) becomes (\ref{VV})  and the
boundary value of $S_T$ from (\ref{Strial0}) takes the
form
\begin{align*}
    S_T(C_T^{-1}(\Theta),\Theta)
    =&
    \ln\det
    (V_T(\Theta)-I_n)
    +
    \Tr (V_T(\Theta)-I_n)\\
    &+
    \Tr
    \chi(
        (V_T(\Theta)-I_n)
        V_T(\Theta)
    )
    +
    n\ln 2 + \Tr V_T(\Theta)\\
    =&
    \ln\det
    (2(V_T-I_n))
    +
    \Tr(2V_T-I_n)
    +
    \Tr
    \chi(
        (V_T-I_n)
        V_T
    ).
\end{align*}
The right-hand side of this equation
vanishes for all nominally reachable  $\Theta \succ \Gamma_T$  in view of
(\ref{chi_identity}). \end{proof}
%==============================================================================

The function $S_T$ from Theorem~\ref{th:SSS} is a
smooth solution of the covariance HJE (\ref{HJ_S}).
The minimum required
supply in the state PDF transition problem (\ref{J}) is obtained by combining Theorems~\ref{th:split} and
\ref{th:SSS}:
\begin{align}
\nonumber
    J_T(
        \cN(\alpha_0, \Pi_0),
        \cN(\alpha_T, \Pi_T)
    )
     =&
    (
        \|\alpha_T - M_T(\alpha_0)\|_{\Gamma_T^{-1}}^2
+
    \Tr
    (
        U_T + V_T
        -
        \sqrt{I_n + 4U_T V_T}
    )\\
\label{Jfinal}
    &-
        \ln\det
        (
            (
                \sqrt{I_n + 4U_T V_T} - I_n
            )
            (2U_T)^{-1}
        )
    )/2,
\end{align}
with $
    U_T
    =
    \Gamma_T^{-1/2}C_T(\Pi_0)\Gamma_T^{-1/2}-I_n$,
$    V_T
    =
    \Gamma_T^{-1/2} \Pi_T \Gamma_T^{-1/2}
$. Here, $M_T$, $C_T$ are the nominal state mean and covariance
semigroups from (\ref{M}), (\ref{C}), and $\Gamma_T$ is the
finite-horizon controllability Gramian from (\ref{Gamma}).
The structure of the right-hand side of
(\ref{Jfinal}) is identical to that in the discrete time case
\cite{VP_2009}, except that the semigroups and the gramian are
computed in accordance with the continuous time setting.

%%%%%%%%%%%%%%%%%%%%%%%%%%%%%%%%%%%%%%%%%%%%%%%%%%%%%%%%%%%%%%%%%%%%%%%%%%%%%%%

%%%%%%%%%%%%%%%%%%%%%%%%%%%%%%%%%%%%%%%%%%%%%%%%%%%%%%%%%%%%%%%%%%%%%%%%%%%%%%%
\appendix
%%%%%%%%%%%%%%%%%%%%%%%%%%%%%%%%%%%%%%%%%%%%%%%%%%%%%%%%%%%%%%%%%%%%%%%%%%%%%%%
\section*{A. Differentiation of trace-analytic functions}
%%%%%%%%%%%%%%%%%%%%%%%%%%%%%%%%%%%%%%%%%%%%%%%%%%%%%%%%%%%%%%%%%%%%%%%%%%%%%%%
\renewcommand{\theequation}{A\arabic{equation}}
\setcounter{equation}{0}
%%%%%%%%%%%%%%%%%%%%%%%%%%%%%%%%%%%%%%%%%%%%%%%%%%%%%%%%%%%%%%%%%%%%%%%%%%%%%%%

The following lemma computes the Frechet derivative for
the composition of an
analytic function with the matrix trace.
%====================================================================
\begin{lemma}
\label{lem:diff} Let $\chi$ be a function of a complex variable,
analytic in a neighbourhood of $\mR_+$. Then the Frechet derivative
of the function $\mP_n \ni \Sigma \mapsto \Tr \chi(\Sigma)\in \mC$
is
\begin{equation}
\label{phidiff}
    \d_{\Sigma}\Tr \chi(\Sigma)
    =
    \chi\,'(\Sigma).
\end{equation}
\end{lemma}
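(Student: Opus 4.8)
The plan is to combine the holomorphic (Cauchy) functional calculus with the cyclic invariance of the trace. Since $\Sigma\in\mP_n$ has its spectrum in a compact subset of $\mR_+$ and $\chi$ is analytic in a neighbourhood of $\mR_+$, I can fix a positively oriented contour $\gamma$ lying in the domain of analyticity of $\chi$ and enclosing the spectrum of $\Sigma$, so that
\[
    \chi(\Sigma)
    =
    \frac{1}{2\pi\mathrm{i}}
    \oint_\gamma
    \chi(z)(zI_n-\Sigma)^{-1}\,\rd z,
    \qquad
    \Tr\,\chi(\Sigma)
    =
    \frac{1}{2\pi\mathrm{i}}
    \oint_\gamma
    \chi(z)\,\Tr\big((zI_n-\Sigma)^{-1}\big)\,\rd z.
\]
For a symmetric direction $H$ and small real $t$, the spectrum of $\Sigma+tH$ still lies inside $\gamma$, so this representation persists and I can differentiate under the integral sign.

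The key step is the \emph{resolvent derivative}. From $(zI_n-\Sigma-tH)^{-1}=(zI_n-\Sigma)^{-1}+t(zI_n-\Sigma)^{-1}H(zI_n-\Sigma)^{-1}+o(t)$ and cyclicity of the trace,
\[
    \frac{\rd}{\rd t}\Big|_{t=0}
    \Tr\big((zI_n-\Sigma-tH)^{-1}\big)
    =
    \Tr\big((zI_n-\Sigma)^{-1}H(zI_n-\Sigma)^{-1}\big)
    =
    \Tr\big((zI_n-\Sigma)^{-2}H\big).
\]
Hence the directional derivative of $\Sigma\mapsto\Tr\,\chi(\Sigma)$ along $H$ equals $\Tr(GH)$, where
\[
    G
    :=
    \frac{1}{2\pi\mathrm{i}}
    \oint_\gamma
    \chi(z)(zI_n-\Sigma)^{-2}\,\rd z.
\]

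It remains to identify $G$ with $\chi\,'(\Sigma)$. Using $(zI_n-\Sigma)^{-2}=-\frac{\rd}{\rd z}(zI_n-\Sigma)^{-1}$ and integrating by parts around the closed contour $\gamma$ (so the boundary term vanishes) transfers the $z$-derivative onto $\chi$:
\[
    G
    =
    \frac{1}{2\pi\mathrm{i}}
    \oint_\gamma
    \chi\,'(z)(zI_n-\Sigma)^{-1}\,\rd z
    =
    \chi\,'(\Sigma),
\]
the last equality being the Cauchy functional calculus applied to $\chi\,'$. This yields $\d_\Sigma\Tr\,\chi(\Sigma)=\chi\,'(\Sigma)$, with the Fr\'echet derivative understood, as throughout the paper, as the element of $\mS_n$ representing the functional $H\mapsto\Tr(GH)$; this is consistent because $\chi\,'(\Sigma)$ is symmetric whenever $\Sigma$ is.

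I expect the main difficulty to be bookkeeping rather than conceptual: justifying differentiation under the integral sign and checking that the spectrum of $\Sigma+tH$ remains inside $\gamma$ for small $t$. As an elementary cross-check, the formula holds on monomials $\chi(z)=z^k$, for which $\frac{\rd}{\rd t}\big|_{t=0}\Tr\big((\Sigma+tH)^k\big)=\sum_{j=0}^{k-1}\Tr\big(\Sigma^j H\Sigma^{k-1-j}\big)=k\,\Tr(\Sigma^{k-1}H)=\Tr(\chi\,'(\Sigma)H)$ by cyclicity, while the general analytic case is precisely what the contour representation above delivers.
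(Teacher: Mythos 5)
Your proof is correct, but it takes a genuinely different route from the paper's. The paper argues from the bottom up: it computes the first variation $\delta\Tr(\Sigma^k)=\sum_{s=0}^{k-1}\Tr(\Sigma^s(\delta\Sigma)\Sigma^{k-1-s})=k\Tr(\Sigma^{k-1}\delta\Sigma)$ for monomials (exactly your final ``cross-check''), extends to polynomials by linearity, and then passes to the limit for a power series whose disk of convergence contains the spectrum of $\Sigma$. You instead work top-down through the Riesz--Dunford functional calculus: represent $\Tr\chi(\Sigma)$ as a contour integral of $\chi(z)\Tr((zI_n-\Sigma)^{-1})$, differentiate the resolvent to get the kernel $(zI_n-\Sigma)^{-2}$, and integrate by parts along the closed contour to transfer the $z$-derivative onto $\chi$, identifying the result as $\chi\,'(\Sigma)$. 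Each step is sound, including the identification of the Fr\'echet derivative with the symmetric matrix $G$ representing $H\mapsto\Tr(GH)$, which matches the convention used throughout the paper. What your approach buys is robustness: a function analytic merely in a neighbourhood of $\mR_+$ (such as the $\chi$ of Theorem~\ref{th:SSS}) need not admit a single power-series expansion covering the spectrum of every $\Sigma\in\mP_n$, so the paper's ``standard passage to the limit'' implicitly requires a localization (or precisely the contour representation you use); your argument supplies that uniformly and also makes the justification of differentiating under the integral sign transparent. The price is heavier machinery where the paper's two-line polynomial computation plus cyclicity of the trace already conveys the essential identity.
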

%====================================================================
\begin{proof} Applying an elementary polynomial $
    \chi(z)
    :=
    z^k
$  of degree $k\> 1$ to a matrix $\Sigma \in \mS_n$ yields the first variation
$
    \delta \Tr (\Sigma^k)
     =
    \sum_{s=0}^{k-1}
    \Tr
    (
        \Sigma^s
        (\delta \Sigma)
        \Sigma^{k-1-s}
    )
     =
     k
    \Tr
    (
        \Sigma^{k-1}
        \delta \Sigma
    )
$;
cf. \cite[p.~270]{SIG_1998}. By linearity, this proves (\ref{phidiff})
for arbitrary
polynomials $\chi$. A standard passage to the limit extends
(\ref{phidiff}) to a power series
$\chi(z):=\sum_{k=0}^{+\infty}c_k(z-z_0)^k$ whose disk of
convergence  contains the
spectrum of $\Sigma$.
\end{proof}
%==============================================================================

For the purposes of Section~\ref{sec:trace_analytic_correction},
we
complement Lemma~\ref{lem:diff} by two differentiation formulae. 
 let $\Psi$ be a smooth $\mP_n$-valued function of the
independent time variable $T$ which generates  a $\mP_n$-valued
function $\Omega:= \Psi \Sigma \Psi$ of $T$ and $\Sigma\in \mP_n$.
 Then for any $\chi$ from Lemma~\ref{lem:diff},
\begin{align}
\label{phidiffdiff}
    (\Tr \chi(\Omega))^{\centerdot}
    & =
    \Tr
    (
        (
            \dot{\Psi}\Psi^{-1}
            +
            \Psi^{-1} \dot{\Psi}
        )
        \chi\,'(\Omega)\Omega
    ),\\
\label{chidiffdiff}
    \d_{\Sigma}\Tr \chi(\Omega)
    & =
    \Psi\chi\,'(\Omega)\Psi,
\end{align}
where $\dot{(\, )}:=\d_T$. Indeed, (\ref{phidiffdiff}) is obtained by applying the chain
rule to the composition of $\Tr\chi$ and $\Omega$ as a function of
$T$ for a fixed $\Sigma$ and employing (\ref{phidiff}):
$
    (\Tr\chi(\Omega))^{\centerdot}
     =
    \Tr
    (
        \chi\,'(\Omega)
        (\dot{\Psi}\Sigma\Psi + \Psi\Sigma\dot{\Psi})
    )
     =
    \Tr
    (
        (\dot{\Psi}\Psi^{-1} + \Psi^{-1}\dot{\Psi})
        \chi\,'(\Omega)\Omega
    )
$, where the commutativity of
$\Omega$ and $\chi\,'(\Omega)$ is used. In a similar vein, the
variation $
    \delta\Tr\chi(\Omega)
    =
    \Tr
    (
        \chi\,'(\Omega)
        \Psi (\delta\Sigma)\Psi
    )
    =
    \Tr
    (
        \Psi
        \chi\,'(\Omega)
        \Psi \delta\Sigma
    )
$ with respect to  $\Sigma$ for a fixed $T$ yields  (\ref{chidiffdiff}).

%%%%%%%%%%%%%%%%%%%%%%%%%%%%%%%%%%%%%%%%%%%%%%%%%%%%%%%%%%%%%%%%%%%%%%%%%%%%%%%
\section*{B. Separation of variables for analytic functions of matrices}
%%%%%%%%%%%%%%%%%%%%%%%%%%%%%%%%%%%%%%%%%%%%%%%%%%%%%%%%%%%%%%%%%%%%%%%%%%%%%%%
\renewcommand{\theequation}{B\arabic{equation}}
\setcounter{equation}{0}
%%%%%%%%%%%%%%%%%%%%%%%%%%%%%%%%%%%%%%%%%%%%%%%%%%%%%%%%%%%%%%%%%%%%%%%%%%%%%%%

The following lemma provides a separation-of-variables
technique for analytic functions of matrices in
Section~\ref{sec:trace_analytic_correction}.

%==============================================================================
\begin{lemma}
\label{lem:matrix_separation} Let $\chi_1$, $\chi_2$ be functions
of a complex variable, analytic in a neighbourhood of
and real-valued on $\mR_+$. Let $G_1$, $G_2$ be
$\mS_n$-valued functions of an independent variable $T$  with
$\Tr G_1(T_0) \ne 0$ for some value $T_0$ of $T$. Suppose that
\begin{equation}
\label{constant}
    \sum_{k = 1}^{2}
    \Tr
    (
        G_k(T) \chi_k(\Omega)
    )
    =
    R(T)
    \qquad
    {\rm for\ all}\
    \Omega\in \mS_n^+\
    {\rm and}\
    T,
\end{equation}
where $R(T)$ is a function of $T$ only.  Then $\chi_1$, $\chi_2$ are
affinely dependent in their common analyticity domain:
\begin{equation}
\label{chi_aff_dep}
    \chi_1 + \lambda \chi_2
    \equiv
    \tau,
\end{equation}
where  $\lambda$ and $\tau$ are real constants. If, in
addition to the previous assumptions,
$\chi_1$ or $\chi_2$ is nonconstant,   then the pair
$(\lambda,\tau)$ is unique, and
\begin{equation}
\label{collinear}
    G_2
    \equiv
    \lambda G_1,
    \qquad
    R
    \equiv
    \tau \Tr G_1.
\end{equation}
\end{lemma}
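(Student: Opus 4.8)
The plan is to collapse the matrix identity (\ref{constant}) into a scalar functional equation by probing it with a one-parameter family of rank-one test matrices, and then to differentiate in the scalar parameter so as to isolate a linear relation between $\chi_1\,'$ and $\chi_2\,'$. Fix $T$ and a unit vector $v\in\mR^n$, and take $\Omega:=\omega vv^{\rT}$ with $\omega\>0$, so that $\Omega$ has the simple eigenvalue $\omega$ with eigenvector $v$ and the eigenvalue $0$ of multiplicity $n-1$. The holomorphic functional calculus then gives $\chi_k(\Omega)=\chi_k(\omega)vv^{\rT}+\chi_k(0)(I_n-vv^{\rT})$, whence
\begin{equation*}
    \Tr(G_k(T)\chi_k(\Omega))
    =
    \chi_k(\omega)\, v^{\rT}G_k(T)v
    +
    \chi_k(0)\, (\Tr G_k(T)-v^{\rT}G_k(T)v).
\end{equation*}
Summing over $k=1,2$ and invoking (\ref{constant}), every term except those carrying $\chi_k(\omega)$ is independent of $\omega$, so $\omega\mapsto\sum_k\chi_k(\omega)\, v^{\rT}G_k(T)v$ is constant; differentiating in $\omega$ produces the scalarized relation
\begin{equation}
\label{plan:scalar}
    (v^{\rT}G_1(T)v)\, \chi_1\,'(\omega)
    +
    (v^{\rT}G_2(T)v)\, \chi_2\,'(\omega)
    = 0,
\end{equation}
valid for all $\omega\>0$, all unit $v$, and all $T$.

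I would then exploit the nondegeneracy hypothesis $\Tr G_1(T_0)\ne0$: the symmetric matrix $G_1(T_0)$ has nonzero trace, hence a unit vector $v_0$ with $a_1:=v_0^{\rT}G_1(T_0)v_0\ne0$ exists (otherwise the trace would vanish). Substituting $(v_0,T_0)$ into (\ref{plan:scalar}) and setting $\lambda:=a_2/a_1$, with $a_2:=v_0^{\rT}G_2(T_0)v_0\in\mR$, gives $\chi_1\,'+\lambda\chi_2\,'\equiv0$ on $\mR_+$; integrating, and then continuing analytically off $\mR_+$ by the identity theorem, yields the affine dependence (\ref{chi_aff_dep}) with a real constant $\tau$. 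For the sharper conclusions, suppose $\chi_1$ or $\chi_2$ is nonconstant. Relation (\ref{chi_aff_dep}) forces $\chi_2$ to be nonconstant, since $\chi_2$ constant would make $\chi_1=\tau-\lambda\chi_2$ constant too, contradicting the hypothesis; consequently $\chi_2\,'\not\equiv0$. Uniqueness of $(\lambda,\tau)$ is then immediate, as a competing pair $(\lambda',\tau')$ would give $(\lambda-\lambda')\chi_2\equiv\tau-\tau'$ and force $\lambda=\lambda'$, $\tau=\tau'$.

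To finish, I would feed $\chi_1\,'=-\lambda\chi_2\,'$ back into the \emph{general} relation (\ref{plan:scalar}), obtaining $\chi_2\,'(\omega)\, v^{\rT}(G_2(T)-\lambda G_1(T))v=0$ for all $\omega$, $v$, $T$. Choosing $\omega$ with $\chi_2\,'(\omega)\ne0$ removes the scalar factor and leaves $v^{\rT}(G_2(T)-\lambda G_1(T))v=0$ for every unit $v$; by symmetry of $G_2-\lambda G_1$ this gives $G_2\equiv\lambda G_1$, the first identity in (\ref{collinear}). Substituting $G_2=\lambda G_1$ into (\ref{constant}) and using $(\chi_1+\lambda\chi_2)(\Omega)=\tau I_n$ from (\ref{chi_aff_dep}) yields $R=\Tr(G_1(\chi_1(\Omega)+\lambda\chi_2(\Omega)))=\tau\Tr G_1$, which is the second identity in (\ref{collinear}).

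The conceptual crux is the scalarization in the first step: it turns an a priori intractable identity between analytic functions of matrices into the single scalar constraint (\ref{plan:scalar}), after which the argument is elementary linear algebra and one-variable analysis. The points demanding care are that the test matrices are only positive semidefinite, so $\chi_k$ must be evaluated at the boundary value $0$ --- legitimate precisely because (\ref{constant}) is assumed throughout $\mS_n^+$; the logical step from ``$\chi_1$ or $\chi_2$ nonconstant'' to the usable $\chi_2\,'\not\equiv0$; and the appeal to the identity theorem to lift the relations from $\mR_+$ to the full common analyticity domain. The hypothesis $\Tr G_1(T_0)\ne0$ is exactly what licenses the division by $a_1$ that extracts $\lambda$.
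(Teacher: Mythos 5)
Your proof is correct and follows essentially the same route as the paper's: both scalarize the matrix identity by probing with rank-one matrices $\omega uu^{\rT}$, using the spectral decomposition and the arbitrariness of the unit vector to reduce everything to quadratic forms. The only cosmetic differences are that you obtain the affine dependence by differentiating in $\omega$ (the paper instead probes with scalar matrices $\omega I_n$ and divides by $\Tr G_1(T_0)$, which pins down $\lambda$ and $\tau$ as trace ratios), and you extract the collinearity $G_2\equiv\lambda G_1$ by back-substituting $\chi_1'=-\lambda\chi_2'$ rather than by invoking the uniqueness of the affine relation directly.
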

%==============================================================================
\begin{proof}
By considering (\ref{constant}) for scalar
matrices $\Omega = \omega I_n$, with $\omega \in \mR_+$,
it follows that
\begin{equation}
\label{affcomb}
    \sum_{k=1}^{2}
    \chi_{k}(\omega)
    \Tr G_k(T)
    =
    R(T)
    \qquad
    {\rm for\ all}\
    \omega \> 0\
    {\rm and}\
    T.
\end{equation}
Dividing both sides of (\ref{affcomb}) for $T = T_0$ by
$\Tr G_1(T_0)\ne 0$ and introducing the ratios
$$
    \lambda
    :=
    \Tr G_2(T_0)/\Tr G_1(T_0),
    \qquad
    \tau
    :=
    R(T_0)/\Tr G_1(T_0)
$$
yields the affine dependence (\ref{chi_aff_dep}) of the functions
$\chi_1$ and $\chi_2$ on $\mR_+$ which extends to their common
analyticity domain by the identity theorem of complex analysis. Now,
let $\chi_1$ or $\chi_2$ be
nonconstant. Then (\ref{chi_aff_dep}) determines the pair
$(\lambda,\tau) \in \mR^2$ uniquely. Therefore, the following
implication holds for any  $\mu := (\mu_1, \mu_2, \mu_3)\in \mR^3$:
\begin{equation}
\label{coll}
    \mu_1 \chi_1
    +
    \mu_2 \chi_2
    \equiv
    \mu_3
    \Longrightarrow
    \mu_2
    =
    \lambda \mu_1,\,
    \mu_3 = \tau \mu_1.
\end{equation}
For any $\omega \> 0$ and $u \in \mR^n$
with $|u| = 1$, consider a matrix
$    \Omega
    :=
    \omega uu^{\rT}
$. Its eigenvalues are $\omega$ (with the eigenvector $u$) and 0 (with
the  eigenspace $u^{\bot}$, the orthogonal complement of $u$ in $\mR^n$). The spectral decomposition of $\Omega$
yields
$    \chi
    (    \omega uu^{\rT}
    )
    =
    \chi(\omega)
    uu^{\rT}
        +
    \chi(0)
    (
        I_n - uu^{\rT}
    )
$, and (\ref{constant}) takes the form
\begin{equation}
\label{sep}
    \sum_{k=1}^{2}
        u^{\rT}G_k(T) u\,
        \chi_k(\omega)
    =
    R(T)
        -
    \sum_{k=1}^{2}
        \chi_k(0)
        (
            \Tr G_k(T) - u^{\rT} G_k(T) u
        ),
    \qquad
    \omega \> 0.
\end{equation}
Here, all $\omega$-independent terms are moved
to the right-hand side. By considering (\ref{sep}) for fixed but
otherwise arbitrary values of $u$ and $T$ and recalling
(\ref{coll}), it follows that
\begin{align}
\label{uMu}
    u^{\rT}
    G_2(T)
    u
     & =
    \lambda
    u^{\rT}
    G_1(T)
    u,\\
\label{RG}
    R(T)
    -
    \sum_{k=1}^{2}
        \chi_k(0)
        (
            \Tr G_k(T) - u^{\rT} G_k(T) u
        )
         & =
        \tau
        u^{\rT}
        G_1(T)u.
\end{align}
Since the unit vector $u$ is arbitrary and the matrices $G_1(T)$ and $G_2(T)$ are both symmetric, then
(\ref{uMu}) implies the first of the relations (\ref{collinear}). Combining the latter with
(\ref{chi_aff_dep}) gives
\begin{align}
\nonumber
    \sum_{k=1}^{2}
        \chi_k(0)
        (
            \Tr G_k(T) - u^{\rT} G_k(T) u
        )
         & =
        (\chi_1(0) + \lambda \chi_2(0))
                (
            \Tr G_1(T) - u^{\rT} G_1(T) u
        )\\
\label{sumchi}
 & =
        \tau
        (
            \Tr G_1(T) - u^{\rT} G_1(T) u
        ).
\end{align}
Substitution of (\ref{sumchi}) into (\ref{RG}) yields
\begin{align}
\nonumber
    R(T)
     & =
    \tau u^{\rT}G_1(T)u
    +
    \sum_{k=1}^{2}
        \chi_k(0)
        (
            \Tr G_k(T) - u^{\rT} G_k(T) u
        )\\
    \nonumber
     & =
    \tau u^{\rT}G_1(T)u
    +
        \tau
        (
            \Tr G_1(T) - u^{\rT} G_1(T) u
        )
        =
        \tau \Tr G_1(T)
\end{align}
which, by the arbitrariness of $T$, establishes the second of the
relations (\ref{collinear}).
\end{proof}
\end{document}